\documentclass[a4paper, oneside, 11pt]{amsart}
\usepackage{import}

\title[The balance on representations of a conformal net]{The balanced structure on the category of representations of a conformal net}
\author{Adrià Marín-Salvador}
\date{}

\newcommand{\Z}{\mathbb{Z}}
\newcommand{\R}{\mathbb{R}}
\newcommand{\C}{\mathbb{C}}

\renewcommand{\S}{\mathcal{S}}
\newcommand{\Diff}{\text{Diff}}

\newcommand{\Mob}{\mathbf{M\ddot{o}b}}
\newcommand{\A}{\mathcal{A}}
\newcommand{\Rot}{\text{Rot}}

\newcommand{\id}{\text{id}}

\newcommand{\Hom}{\text{Hom}}

\newcommand{\Rep}{\text{Rep}}

\newcommand{\End}{\text{End}}

\newcommand{\Jcal}{\mathcal{J}}

\newcommand{\B}{\mathbb{B}}
\newcommand{\Vect}{\text{Vect}}

\newcommand{\DHR}{\text{DHR}}

\usepackage{amssymb}
\usepackage{amsfonts}
\usepackage{amsmath}
\usepackage{quiver}
\usepackage{amsthm}
\usepackage{comment}
\usepackage[english]{babel}
\usepackage{quiver}
\usepackage{nicefrac}
\usepackage[a4paper]{geometry}
\usepackage{amssymb}
\usepackage{mathtools}
\usepackage{stackrel}
\usepackage[hidelinks]{hyperref}
\usepackage{mathrsfs} 
\usetikzlibrary{decorations.pathmorphing}
\usetikzlibrary{calc}
\usetikzlibrary{decorations.markings}
\usetikzlibrary{fadings,decorations.pathreplacing}
\usetikzlibrary{matrix,arrows}
\usetikzlibrary{arrows,calc,decorations.pathreplacing,decorations.markings,shapes.geometric,shadows}

\usepackage{xcolor}

\tikzset{-dot-/.style={decoration={
  markings,
  mark=at position 0.5 with {\fill circle (2pt);}},postaction={decorate}}}

\tikzset{
	Fdot/.style={circle, draw, fill, inner sep=0pt}, 
	Odot/.style={circle, draw, inner sep=0.1pt, minimum size=0.1cm}
	}

  \vfuzz \hfuzz
\usepackage{xcolor}
\usetikzlibrary{arrows}

\theoremstyle{definition}
\newtheorem{definition}{Definition}[section]
\newtheorem{proposition}[definition]{Proposition}

\newtheorem{theorem}[definition]{Theorem}
\newtheorem{corollary}[definition]{Corollary}

\newtheorem{remark}[definition]{Remark}

\newtheorem*{theorem*}{Theorem}
\newtheorem*{definition*}{Definition}
\newtheorem*{example*}{Example}
\newtheorem*{corollary*}{Corollary}
\newtheorem*{conjecture*}{Conjecture}

\definecolor{myviolet}{HTML}{7D3C98}
\definecolor{myorange}{HTML}{F39C12}
\definecolor{myblue}{HTML}{2E86C1}
\definecolor{mygreen}{HTML}{1E8449}
\definecolor{myred}{HTML}{C0392B}

\tikzstyle{ipe stylesheet} = [
  ipe import,
  even odd rule,
  line join=round,
  line cap=butt,
  ipe pen normal/.style={line width=0.4},
  ipe pen heavier/.style={line width=0.8},
  ipe pen fat/.style={line width=1.2},
  ipe pen ultrafat/.style={line width=2},
  ipe pen normal,
  ipe mark normal/.style={ipe mark scale=3},
  ipe mark large/.style={ipe mark scale=5},
  ipe mark small/.style={ipe mark scale=2},
  ipe mark tiny/.style={ipe mark scale=1.1},
  ipe mark normal,
  /pgf/arrow keys/.cd,
  ipe arrow normal/.style={scale=7},
  ipe arrow large/.style={scale=10},
  ipe arrow small/.style={scale=5},
  ipe arrow tiny/.style={scale=3},
  ipe arrow normal,
  /tikz/.cd,
  ipe arrows, 
  <->/.tip = ipe normal,
  ipe dash normal/.style={dash pattern=},
  ipe dash dotted/.style={dash pattern=on 1bp off 3bp},
  ipe dash dashed/.style={dash pattern=on 4bp off 4bp},
  ipe dash dash dotted/.style={dash pattern=on 4bp off 2bp on 1bp off 2bp},
  ipe dash dash dot dotted/.style={dash pattern=on 4bp off 2bp on 1bp off 2bp on 1bp off 2bp},
  ipe dash normal,
  ipe node/.append style={font=\normalsize},
  ipe stretch normal/.style={ipe node stretch=1},
  ipe stretch normal,
  ipe opacity 10/.style={opacity=0.1},
  ipe opacity 30/.style={opacity=0.3},
  ipe opacity 50/.style={opacity=0.5},
  ipe opacity 75/.style={opacity=0.75},
  ipe opacity opaque/.style={opacity=1},
  ipe opacity opaque,
]
\definecolor{red}{rgb}{1,0,0}
\definecolor{blue}{rgb}{0,0,1}
\definecolor{green}{rgb}{0,1,0}
\definecolor{yellow}{rgb}{1,1,0}
\definecolor{orange}{rgb}{1,0.647,0}
\definecolor{gold}{rgb}{1,0.843,0}
\definecolor{purple}{rgb}{0.627,0.125,0.941}
\definecolor{gray}{rgb}{0.745,0.745,0.745}
\definecolor{brown}{rgb}{0.647,0.165,0.165}
\definecolor{navy}{rgb}{0,0,0.502}
\definecolor{pink}{rgb}{1,0.753,0.796}
\definecolor{seagreen}{rgb}{0.18,0.545,0.341}
\definecolor{turquoise}{rgb}{0.251,0.878,0.816}
\definecolor{violet}{rgb}{0.933,0.51,0.933}
\definecolor{darkblue}{rgb}{0,0,0.545}
\definecolor{darkcyan}{rgb}{0,0.545,0.545}
\definecolor{darkgray}{rgb}{0.663,0.663,0.663}
\definecolor{darkgreen}{rgb}{0,0.392,0}
\definecolor{darkmagenta}{rgb}{0.545,0,0.545}
\definecolor{darkorange}{rgb}{1,0.549,0}
\definecolor{darkred}{rgb}{0.545,0,0}
\definecolor{lightblue}{rgb}{0.678,0.847,0.902}
\definecolor{lightcyan}{rgb}{0.878,1,1}
\definecolor{lightgray}{rgb}{0.827,0.827,0.827}
\definecolor{lightgreen}{rgb}{0.565,0.933,0.565}
\definecolor{lightyellow}{rgb}{1,1,0.878}
\definecolor{black}{rgb}{0,0,0}
\definecolor{white}{rgb}{1,1,1}

\usetikzlibrary{arrows.meta,patterns}
\usetikzlibrary{ipe}

\begin{document}

\begin{abstract}
    Let $\A$ be a (not necessarily rational) conformal net. We show that the braided $\mathrm{W}^*$-tensor category $\Rep(\A)$ of representations of $\A$ is canonically a balanced $\mathrm{W}^*$-tensor category. The balance is given by the action of $e^{-2\pi i L_0}$, where $L_0$ denotes the generator of rotations on $S^1$. In \cite{GcrossedbraidedRep}, we generalize this result to the larger context of a group acting on $\A$. We provide here a more accessible proof for the case where no group is~present.
\end{abstract}

\maketitle

\section{Introduction}

Conformal nets provide a formalization of 2-dimensional unitary chiral conformal field theory \cite{bmt88, bsm90, bgl93, gf93, w95, kl04, bdh15}. A \emph{conformal net} consists of a vacuum Hilbert space $H_0$ equipped with a unitary projective action of the group $\Diff^+(S^1)$ of orientation-preserving diffeomorphisms of the circle together with an assignment of a von Neumann algebra $\A(I)\subset B(H_0)$ to every interval $I\subset S^1$ of the circle. The assignment $I\mapsto \A(I)$ is compatible with the inclusion of intervals and with the $\Diff^+(S^1)$ action.

Given a conformal net $\A = (\A, H_0)$, a \emph{representation} of $\A$ is a Hilbert space $H$ equipped with $*$-actions of all the von Neumann algebras $\A(I)$ compatible with the inclusions $\A(J)\subset \A(I)$ for all pairs of intervals $J\subset I$. We denote by $\Rep(\A)$ the category whose objects are $\A$-representations and whose morphisms are morphisms of Hilbert spaces intertwining the actions of all the algebras $\A(I)$. Given a representation $H\in \Rep(\A)$, the algebra $\End_{\Rep(\A)}(H)$ of endomorphisms of $H$ is a von Neumann algebra, making $\Rep(\A)$ a $\mathrm{W}^*$-category in the sense of \cite{henriques2024completewcategories}. In addition, $\Rep(\A)$ is canonically a braided $\mathrm{W}^*$-tensor category. The tensor structure of $\Rep(\A)$ can be constructed in different equivalent ways, see \cite[Sec. IV.2.]{gf93}, \cite[Sec. 30]{was98}, \cite[Sec. 1.C]{bdh15} and \cite[Sec. 2]{Gui21}. For the braiding, see \cite[Sec. 2]{frs}, \cite[Sec. 7]{lon89}, \cite[Def. 4.16]{gf93} and the modern treatment of \cite[Sec. 2]{Gui21}. Recall that, given a braided $\mathrm{W}^*$-tensor category $\mathcal{C}$, with braiding $\beta$, a \emph{balance} (or \emph{categorical twist}) on $\mathcal{C}$ consists of a natural family of unitary isomorphisms $\theta_X:X\to X$ such that
\[
\theta_{X\otimes Y} = (\theta_X\otimes \theta_Y)\circ\beta_{Y,X}\circ \beta_{X,Y}
\]
for all $X,Y\in \mathcal{C}$. We prove that $\Rep(\A)$ is canonically a balanced $\mathrm{W}^*$-tensor category, see Theorem \ref{Thm: RepAutAIsCrossedBalanced}.

\begin{theorem*}
    Let $\A$ be a (not necessarily rational) conformal net. The braided $\mathrm{W}^*$-tensor category $\Rep(\A)$ of representations of $\A$ is canonically a balanced $\mathrm{W}^*$-tensor category. 
\end{theorem*}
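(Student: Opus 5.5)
My plan is to construct $\theta_H$ from the rotation action on a representation and then check the twist identity. The key input is that every representation $H\in\Rep(\A)$ carries a canonical unitary action of the universal cover $\widetilde{\Diff}^+(S^1)$, or at least of the universal cover $\widetilde{\Rot}$ of the rotation subgroup. This action implements the geometric action, in the sense that
\[
U_H(\tilde g)\, a\, U_H(\tilde g)^* = U_0(g)\, a\, U_0(g)^*, \qquad a\in \A(I),
\]
with the right-hand side read through the representation. I would obtain it from the fact that the action of $\Diff(I)$ on $\A(I)$ is inner (Haag duality and covariance). The local implementers can be transported to $H$, since on $\A(I)$ they are elements of the algebra. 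One then glues local pieces along a factorization of elements of the cover into products of diffeomorphisms supported in intervals. Uniqueness of the implementation up to phase on the vacuum, and the absence of nontrivial continuous characters of the perfect group $\widetilde{\Diff}^+(S^1)$, pin down a genuine, not merely projective, canonical action. I then set
\[
\theta_H := U_H(\tilde r_{2\pi}) = e^{-2\pi i L_0^H},
\]
where $\tilde r_{2\pi}$ is the lift of the full rotation. Since $r_{2\pi}=\id$, this operator commutes with every $\A(I)$, so it lies in $\End_{\Rep(\A)}(H)$ and is unitary. Naturality follows from canonicity: any intertwiner $f\colon H\to K$ intertwines the local implementers, and hence intertwines $U_H$ and $U_K$.

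Next I would compute $\theta_{H\otimes K}$, using a concrete model of the fusion, such as Connes fusion over $\A(I)$ with $H$ localized in the left half-circle and $K$ in the right, as in \cite{bdh15} or \cite{Gui21}. On such a model, a rotation by $\pi$ supported near the two halves can be implemented as a "half-rotation" that exchanges the two localization regions. Composing with the second half-rotation, one sees that the total $2\pi$ rotation on $H\otimes K$ factors into two pieces. The first is the product of the $2\pi$-rotations acting on each factor separately, which gives $\theta_H\otimes\theta_K$. The second is the operator obtained by transporting $K$ around $H$ and back, which by the definition of the braiding in \cite{Gui21} is exactly $\beta_{K,H}\circ\beta_{H,K}$. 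This is the conformal-net version of the spin–statistics and monodromy computation.

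Concretely, I would write the path $t\mapsto r_{2\pi t}$ in $\widetilde{\Diff}$ as a product of two paths:
\begin{itemize}
\item one path rotating each half-circle region by $2\pi$ "within itself", realized as a loop of diffeomorphisms fixing a neighbourhood of two points;
\item one path moving the region of $K$ once around the region of $H$.
\end{itemize}
These two paths are homotopic relative to their endpoints to the standard one. Since the action is by the universal cover, homotopic paths give equal operators.

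I expect the main obstacle to be this last step: making rigorous the identification of the action of the "moving" part with the double braiding. This requires knowing how the canonical $\widetilde{\Diff}$ action on a fused representation interacts with the fusion isomorphisms. I would first prove a covariance lemma stating that the Connes fusion is equivariant for the diagonal $\widetilde{\Diff}$ action, and that the braiding is given by transport along a path of diffeomorphisms. The identity then reduces to a homotopy computation in $\widetilde{\Diff}^+(S^1)$.
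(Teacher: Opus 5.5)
Your overall architecture matches the paper's: take $\theta_H$ to be the action of a lift of the full rotation, check that it is a natural automorphism in $\Rep(\A)$, and show that on a fused module the full rotation equals $\theta_H\boxtimes\theta_K$ composed with a monodromy, which Gui's path-continuation definition of the braiding identifies with $\mathbb{B}_{K,H}\circ\mathbb{B}_{H,K}$. There are, however, two genuine gaps.

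\emph{First gap: normalization of $\theta_H$.} Perfectness of $\widetilde{\Diff}^+(S^1)$ gives only \emph{uniqueness} of a lift of a projective representation, not its existence. Here no lift exists: the projective action of $\Diff^+(S^1)$ carries the Virasoro cocycle of central charge $c$, which is not a coboundary once $c\neq 0$. What your gluing of local implementers actually produces is an action of the central extension $\Diff^+_\A(S^1)$ (Theorem \ref{Thm: RepIsConformal}). In that group the element over $\tilde r_{2\pi}$ is defined only up to a phase. Retreating to $\widetilde{\Rot}\cong\R$ does not help: a lift exists, but only up to a character $t\mapsto e^{iat}$, which multiplies $\theta_H$ by a phase. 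The twist identity detects this, since a uniform rescaling $\theta\mapsto\lambda\theta$ multiplies the left side by $\lambda$ and the right side by $\lambda^2$. The paper fixes the phase by restricting to $\widetilde{\Mob}$, which is perfect and for which Bargmann's theorem gives a unique genuine lift. Equivalently, one uses the element of $\Diff^+_\A(S^1)$ over $x\mapsto x-1$ whose unitary component is $1$ on $H_0$. This normalization gives $\theta_{H_0}=\id$, and it is exactly what makes $U(\tilde g)^{-1}\Omega=\Omega$ in the formula below. The direction of the lift matters too: with $\mathbb{B}$ built from the anticlockwise path $\varrho$, the balance is the clockwise lift $\widetilde{\exp}(-2\pi i L_0)$. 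The other lift is a balance for the reverse braiding, not in general for $\mathbb{B}$.

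\emph{Second gap: the key step, as you describe it, would fail.} Your factorization of the rotation path cannot work, for two reasons. A loop of diffeomorphisms fixing a point (a fortiori, fixing neighbourhoods of two points) is contractible in $\Diff^+(S^1)$. It therefore lifts to the identity of the cover and cannot produce $\theta_H\boxtimes\theta_K$. Moreover, $\mathbb{B}_{K,H}\circ\mathbb{B}_{H,K}$ commutes with the action of the net. So if it were the action of a group element, that element would have to lie over the identity diffeomorphism, i.e.\ be a power of the full-rotation lift up to phase. Hence no factorization of $\tilde r_{2\pi}$ in the group splits off the monodromy, and no homotopy computation in $\widetilde{\Diff}^+(S^1)$ will find it.

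The monodromy comes instead from how a \emph{single} group element acts on a fused module. This is Gui's formula (Theorem \ref{Thm: ConformalStructureOfFusion}): $U^{H\boxtimes K}(g)(\xi\otimes\eta)=(\lambda_z^\bullet)^{-1}\big(U^H(g)Z(\xi,I)U(g)^{-1}\Omega\otimes U^K(g)\eta\big)$, where $\lambda_z$ is the path traced by $z\in I$ along a path from $1$ to $g$. Your ``diagonal equivariance'' lemma must contain this transport term. For the central element, the naive diagonal action would give $\theta_H\boxtimes\theta_K$ with no monodromy at all.

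With the transport term, taking $g=\widetilde{\exp}(-2\pi iL_0)$ gives $\theta_{H\boxtimes K}=\tau^\bullet\circ(\theta_H\boxtimes\theta_K)$ on $H(-)\boxtimes K$, where $\tau$ is the anticlockwise loop (Corollary \ref{cor: e2piiL0OnConnesFusion}). Next, $\tau=(e^{\pi i}\cdot\varrho)\ast\varrho$ together with Proposition \ref{prop: SinglePathContinuationWRTDoublePathContinuation} identifies $\tau^\bullet$ with $\mathbb{B}_{K,H}\circ\mathbb{B}_{H,K}$. Finally, $\theta_H\boxtimes\theta_K$ commutes with path continuations, which completes the proof.
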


The balance is constructed as follows. Given a representation $H\in \Rep(\A)$, it was proved in \cite{MR2078164} (see also \cite{Hen19}) that $H$ carries an action of the group $\widetilde{\Mob}$, the universal cover of the group $\Mob$ of Möbius transformations on the circle. Denoting by $\exp: \R\to S^1$ the universal cover of $S^1$ and by $\Diff(\R)$ the group of diffeomorphisms of $\R$, we have $\widetilde{\Mob} = \{(\tilde{f}, f)\in \Diff(\R)\times \Mob\, |\, f\circ \exp = \exp\circ \tilde{f}\}$. Then, the balance $\theta_H$ on $H$ is given by the action of $(x\mapsto x-1, \id)\in \widetilde{\Mob}$, that is, the action of $e^{-2\pi i L_0}$, for $L_0$ the generator of rotations on $S^1$.

A large part of the literature focuses on Möbius covariant nets, where the vacuum $H_0$ is only required to carry an action of $\Mob$. Then, representations of the net do not carry a canonical action of $\widetilde{\Mob}$ and hence do not carry a canonical balance. Similarly, even when $\Diff^+(S^1)$-covariant nets are studied, they are mostly treated in the language of DHR (for Doplicher, Haag, Roberts) endomorphisms \cite{MR297259, MR334742, gf93}. In that language, the balance does not have a natural definition in general. In the particular case where $\A$ is rational, then $\Rep(\A)$ is a unitary rigid tensor category (from results in \cite{MR1838752, MR2100058}), and therefore it does have a canonical balance. The conformal spin and statistics Theorem from \cite{MR1410566} implies that our balance agrees with the canonical one in the case where $\A$ is rational, see Theorem \ref{Thm: SpinStatistics} and the discussion leading~to~it.

In this note, we rely on the construction of the fusion and the braiding on $\Rep(\A)$ defined in \cite{Gui21}. In Section \ref{Sec: ConfNets} we recall the definition of conformal nets and their representations, in Section \ref{Sec: CatOfReps} we recall the braided tensor structure on $\Rep(\A)$ of \cite{Gui21}, and in Section \ref{sec: balance} we construct the balance and prove the necessary compatibility with the braiding.

\section*{Acknowledgements}
This work has been funded by the EPSRC grant EP/W524311/1.  

For the purpose of Open Access, the author has applied a CC BY public copyright license to any Author Accepted Manuscript version arising from this submission.

\addtocontents{toc}{\protect\setcounter{tocdepth}{5}}

\section{Conformal nets}
\label{Sec: ConfNets}

Let $S^1:=\{z\in \C\ |\ |z| = 1\}$ denote the standard circle. We define an \emph{interval} of $S^1$ to be an open, connected, non-empty, non-dense subset of $S^1$, and we denote by $\Jcal$ the collection of intervals of $S^1$. Let us denote by $\Mob$ the set of Möbius transformations of $S^1$, that is, transformations of the form
\[
z\mapsto \frac{az+b}{\bar{b}z+\bar{a}}
\]
with $a,b\in \C$, $|a|^2-|b|^2 = 1$. Then, $\Mob$ is a Lie group isomorphic to $PSL(2, \R).$ In particular, the group $\Rot$ of rotations of the circle embeds in $\Mob$. We denote an anticlockwise rotation of angle $t$ by $R_t$. In this paper, all Hilbert spaces and $\mathrm{C}^*$-algebras are assumed to be separable.

\begin{definition}
    A \textit{Möbius covariant net on $S^1$} is a tuple $(H_0, \A, U, \Omega)$ where $H_0$ is a Hilbert space equipped with a non-zero vector $\Omega\in H_0$, $U$ is a strongly continuous unitary representation of $\Mob$ on $H_0$ and $\A$ is an assignment of a von Neumann algebra $\A(I)$ acting on $H_0$ for every interval $I\in\Jcal$. This data is required to satisfy, for every $I,\, J\in\Jcal$ and $\varphi\in\Mob$,
    \begin{enumerate}
        \item isotony: if $J\subset I$, then $\A(J)\subset \A(I)$;
        \item locality: if $I\cap J = \emptyset$, then $\A(J)$ and $\A(I)$ commute in $B(H_0)$;
        \item Möbius covariance: $U(\varphi)\A(I)U(\varphi)^* = \A(\varphi I)$;
        \item positivity of the energy: the representation $U$ is of positive energy, meaning that the conformal Hamiltonian $L_0$, defined by $U(R_t) = e^{itL_0}$ is positive;
        \item uniqueness of the vacuum vector: $\Omega\in H_0$ is the unique vector, up to a phase, which is invariant under $U$;
        \item cyclicity of the vacuum: $\Omega$ is cyclic for the von Neumann  algebra $\A(S^1):=\bigvee\limits_{I\in \Jcal}\A(I)\subset B(H_0)$.
    \end{enumerate}
\end{definition}

Let $\Diff^+(S^1)$ denote the group of orientation-preserving diffeomorphisms of $S^1$. A strongly continuous projective unitary representation $V$ of $\Diff^+(S^1)$ on a Hilbert space $H$ is a strongly continuous homomorphism $V: \Diff^+(S^1)\to \mathcal{P}U(H)$. Note that $\Mob\subset\Diff^+(S^1)$. We say that $V$ is an extension of a unitary representation $U$ of $\Mob$ on $H$ if, for every $\varphi\in \Mob$, it holds that $V(\varphi) = [U(\varphi)]$, where $[-]: U(H)\to \mathcal{P}U(H)$ denotes the projection map.

\begin{definition}
    A Möbius covariant net $(H_0, \A ,U, \Omega)$ is said to be a \emph{conformal net} if there is an extension of $U$ to a unitary projective representation of $\Diff^+(S^1)$, which we also denote by $U$, such that for all $I\in\Jcal$ and $\varphi\in \Diff^+(S^1)$, it holds that
    \begin{enumerate}
        \item $U(\varphi)\A(I) U(\varphi)^* = \A(\varphi I)$;
        \item if $\varphi|_{I} = \id_I$, then $\text{Ad}(U(\varphi))|_{\A(I)} = \id_{\A(I)}.$
    \end{enumerate}
\end{definition}

The following are well-known properties of any conformal net $(H_0, \A, U, \Omega)$. 
\begin{enumerate}
    \item Haag duality: the commutant of $\A(I)$ in $B(H_0)$ is $\A(I^c)$, where $I^c$ denotes the interior of $S^1\setminus I$ \cite[Thm. 2.19]{gf93};
    \item The Reeh-Schlieder Theorem: the vacuum vector $\Omega$ is cyclic and separating for every von Neumann algebra $\A(I)$ \cite[Cor. 2.8]{gf93};
    \item Each von Neumann algebra $\A(I)$ is a type $\mathrm{III}_1$ factor \cite[Prop. 1.2]{MR1410566}.
\end{enumerate}

Let $(H_0,\A,U, \Omega)$ be a conformal net, which we will simply denote by $\A$ from now on. Let us introduce the category of representations of $\A$.

\begin{definition}\label{def: Rep}
    A \emph{representation} of a conformal net $\A$ consists of a Hilbert space $H$ and a collection of $*$-homomorphisms $\pi_I:\A(I)\to B(H)$ for every interval $I\in\Jcal$ such that
\[
\pi_I|_{\A(J)} = \pi_J
\]
whenever $J\in\Jcal$ is an interval such that $J\subset I$. We write $\Rep(\A)$ for the category whose objects are representations of $\A$ and whose morphisms are bounded linear intertwining the actions of all the algebras $\A(I)$.
\end{definition}

We note that if the underling Hilbert space of a representation $(H, \pi)$ is separable, which we always assume in this paper, the $*$-actions $\pi_I$ are automatically normal. The conformal net $\A$ has a canonical representation on its vacuum Hilbert space $H_0$ by definition, which we call the \emph{vacuum representation of} $\A$. Given $I\in\Jcal$ and $x\in \A(I)$, we write $\pi_{0,I}(x)$ or $\pi_0(x)$ for the action of $x$ on the vacuum representation $H_0$.

\section{The braided category of representations of a conformal net}
\label{Sec: CatOfReps}
The category $\Rep(\A)$ is a braided $\mathrm{W}^*$-tensor category. The tensor structure of $\Rep(\A)$ can be constructed in different equivalent ways, see \cite[Sec. IV.2.]{gf93}, \cite[Sec. 30]{was98}, \cite[Sec. 1.C]{bdh15} and \cite[Sec. 2]{Gui21}. For the braiding, see \cite[Sec. 2]{frs}, \cite[Sec. 7]{lon89}, \cite[Def. 4.16]{gf93} and the modern treatment of \cite[Sec. 2]{Gui21}. We recall here the construction of \cite{Gui21}. Actually, we will produce the opposite of the braided monoidal category of Gui, in order to agree with our conventions in the work \cite{GcrossedbraidedRep} (see Remark \ref{Rk: Opposite}).

\subsection{Connes fusion of representations}\label{Sec: ConnesFusion}

Let $\A$ be a conformal net. Given $I\in \Jcal$, we write $I^{c}\in \Jcal$ for the interior of $S^1\setminus I$, and call it the \emph{complement} of $I$. Given non-zero representations $H,\, K\in \Rep(\A)$, and $I\in \Jcal$, we write $\Hom_{\A(I)}(H, K)$ for the vector space of bounded operators $T:H\to K$ such that, for all $x\in \A(I)$, it holds that $T\circ \pi^H_{I}(x) = \pi_I^K(x)\circ T$. Since $\A(I)$ is a type $\mathrm{III}$ factor, there is an equivalence $H\cong K$ of $\A(I)$-modules. Hence, there are unitary operators in $\Hom_{\A(I)}(H, K)$. Fix a representation $H = (H, \pi^H)\in \Rep(\A)$ and let $I\in \Jcal$.

\begin{definition}
    A vector $\xi\in H$ is said to be $I$\emph{-bounded} if there exists $T\in \Hom_{\A(I^{c})}(H_0, H)$ such that $\xi = T\Omega$. We write $H(I)\subset H$ for the subspace of $I$-bounded vectors.
\end{definition}

Given $\xi\in H(I)$, there exists a unique operator $T\in \Hom_{\A(I^{c})}(H_0, H)$ such that $\xi = T\Omega$, as $\A(I^{c})\Omega$ is dense in $H_0$ by the Reeh-Schlieder Theorem. We denote such operator by $T = Z(\xi, I)$. In particular, for the vacuum representation we have that $H_0(I) = \A(I)\Omega$, by Haag duality. This implies that $H_0(I)\subset H_0$ is dense for all $I\in\Jcal$. Since there exist unitary operators in $\Hom_{\A(I^{c})}(H_0, H)$, we obtain that $H(I)\subset H$ are dense inclusions for all $I\in\Jcal$. In addition, if $I_1\subset I$ is an inclusion of intervals in $\Jcal$, then $H(I_1)\subset H(I)$ is also dense. 

Let $K = (K, \pi^K)\in \Rep(\A)$ be another representation and $I,\, J\in \Jcal$ be disjoint intervals. We define a positive sesquilinear form $\langle - |-\rangle$ on $H(I)\otimes K(J)$ by setting
\[
\langle \xi\otimes\eta|\xi'\otimes\eta'\rangle:=\langle Z(\eta', J)^*Z(\eta, J)Z(\xi', I)^*Z(\xi, I)\Omega|\Omega\rangle
\]
 for $\xi, \xi'\in H(I)$ and $\eta, \eta'\in K(J)$. Note that $Z(\xi', I)^*Z(\xi, I): H_0\to H_0$ is in $\Hom_{\A(I^{c})}(H_0,H_0)\cong \A(I)$, and $Z(\eta', J)^*Z(\eta, J)\in \Hom_{\A(J^c)}(H_0, H_0) \cong \A(J)$. Hence, the maps $Z(\xi', I)^*Z(\xi, I)$ and $Z(\eta', J)^*Z(\eta, J)$ commute by locality, and the inner product above also equals
\[
\langle \xi\otimes\eta|\xi'\otimes\eta'\rangle=\langle Z(\xi', I)^*Z(\xi, I)Z(\eta', J)^*Z(\eta, J)\Omega|\Omega\rangle.
\]
The positivity of $\langle-|-\rangle$ follows from \cite[Prop. IX.3.15]{MR1943006}.

\begin{definition}\label{def: ConnesFusion}
    The Hilbert space $H(I)\boxtimes K(J)$ is the completion of $H(I)\otimes K(J)$ with respect to the inner product $\langle -|-\rangle$. We call it the \emph{Connes fusion of $H$ and $K $ over the intervals~$I, J.$} 
\end{definition}

We continue denoting by $\xi\otimes \eta$ the image of a vector $\xi\otimes \eta\in H(I)\otimes K(J)$ in $H(I)\boxtimes K(J)$. Recall that we can regard $Z(\xi', I)^*Z(\xi, I)$ as an element of $\A(I)$, which we denote by the same symbol. We have that
\begin{align}\label{eq: OneSidedInnerProd}
    \langle \xi\otimes\eta|\xi'\otimes \eta'\rangle  = \langle \pi^K_{I}(Z(\xi', I)^*Z(\xi, I))\eta|\eta'\rangle  
  = \langle \pi^H_{J}(Z(\eta', J)^*Z(\eta, J))\xi|\xi'\rangle.
\end{align}
Given $I_1,J_1\in\Jcal$ disjoint such that $I_1\subset I$, $J_1\subset J$, there is a canonical equivalence
\begin{equation}\label{eq: CanonicalEquivalenceInclusion}
H(I_1)\boxtimes K(J_1)\xrightarrow{\cong} H(I)\boxtimes K(J)
\end{equation}
induced by the inclusion $H(I_1)\otimes K(J_1)\hookrightarrow H(I)\otimes K(J)$. We also have a canonical equivalence
\begin{equation}
\label{eq: Swap}
H(I)\boxtimes K(J)\xrightarrow{\cong} K(J)\boxtimes H(I)
\end{equation}
induced by the map $H(I)\otimes K(J)\xrightarrow{} K(J)\otimes H(I)$ defined as $\xi\otimes\eta\to \eta\otimes \xi$.

Let $\hat{H},\,\hat{K}\in \Rep(\A)$ be two representations, and $F: H\to \hat{H}$ and $G: K\to \hat{K}$ morphisms in $\Rep(\A)$. We denote by $F\boxtimes G : H(I)\boxtimes K(J)\to \hat{H}(I)\boxtimes \hat{K}(J)$ the bounded map induced~by 
\begin{equation}\label{eq: FusionOfMaps}
    (F\boxtimes G)(\xi\otimes\eta) = F(\xi)\otimes G(\eta)
    \end{equation}
    for all $\xi\in H(I)$ and $\eta\in K(J)$.

We can also define the Connes fusion over pairs of points. Given $z, \zeta\in S^1$ distinct, let
\[
H(z)\boxtimes K(\zeta):=\varinjlim\limits_{(z,\zeta)\in I\times J}H(I)\boxtimes K(J) = \Bigg(\bigsqcup\limits_{(z,\zeta)\in I\times J}H(I)\boxtimes K(J)\Bigg)\Big/\cong,
\]
where the limit runs over disjoint intervals $I,\, J\in \Jcal$ with $z\in I$ and $\zeta\in J$, and $\cong$ denotes the equivalence obtained by identifying $H(I_1)\boxtimes K(J_1)$ with $H(I)\boxtimes K(J)$ by the unitary \eqref{eq: CanonicalEquivalenceInclusion} whenever $I_1\subset I$ and $J_1\subset J$. Given $I,\, J\in \Jcal$ disjoint with $z\in I$ and $\zeta\in J$, the canonical~map 
\[
H(I)\boxtimes K(J)\to H(z)\boxtimes K(\zeta)
\]
is a unitary equivalence. 

We will next relate the Connes fusions over different pairs of intervals. Given intervals $I_i,\, J_i\in \Jcal$ with $J_i\subset I_i^c$ for $i = 0,1$, and a suitable path $\gamma$ in $S^1\times S^1$ from a point in $I_0\times J_0$ to a point in $I_1\times J_1$, we wish to define a unitary $\gamma^\bullet: H(I_0)\boxtimes K(J_0)\xrightarrow{\cong} H(I_1)\boxtimes K(J_1)$. Let $\text{Conf}_2(S^1):=\{(z,\zeta)\in S^1\times S^1\,|\, z \neq \zeta\}$ be the space of configurations of two points in $S^1$. Let $\gamma = (\alpha, \beta):[0,1]\to \text{Conf}_2(S^1)$ be a path from $\gamma(0) =: (z_0, \zeta_0)$ to $\gamma(1) =: (z_1, \zeta_1)$. Assume that $\gamma$ is small enough so that $\gamma([0,1])\subset I\times J$, for some disjoint intervals $I,\, J\in \Jcal$. We define the unitary equivalence $\gamma^\bullet: H(z_0)\boxtimes K(\zeta_0)\xrightarrow{\cong}H(z_1)\boxtimes K(\zeta_1)$ as the composition
\[
H(z_0)\boxtimes K(\zeta_0)\xrightarrow{\cong}H(I)\boxtimes K(J)\xrightarrow{\cong}H(z_1)\boxtimes K(\zeta_1).
\]
For a general path $\gamma$, we choose $0 = t_0<t_1<\ldots <t_k = 1$ such that each $\gamma_n:=\gamma|_{[t_n, t_{n+1}]}$ is small enough in the sense above, and define $\gamma^\bullet: H(z_0)\boxtimes K(\zeta_0)\xrightarrow{\cong}H(z_1)\boxtimes K(\zeta_1)$ as 
\[
\gamma^\bullet = \gamma_{k-1}^\bullet\ldots\gamma_1^\bullet\gamma_0^\bullet.
\]
Since finer partitions give the same result, the map $\gamma^\bullet$ is independent of the choice of partition. We call $\gamma^\bullet$ the \emph{path-continuation} induced by $\gamma$.

Let $I_0, I_1, J_0, J_1\in \Jcal$ be intervals such that $J_0\subset I_0^{c}$ and $J_1\subset I_1^{c}$. Let $\gamma$ be a path in $\text{Conf}_2(S^1)$ from $\gamma(0) =: (z_0, \zeta_0)\in I_0\times J_0$ to $\gamma(1) =: (z_1, \zeta_1)\in I_1\times J_1$. We define the path continuation $\gamma^\bullet : H(I_0)\boxtimes K(J_0)\xrightarrow{\cong} H(I_1)\boxtimes K(J_1)$ as the composition
\[
H(I_0)\boxtimes K(J_0)\xrightarrow{\cong}H(z_0)\boxtimes K(\zeta_0)\xrightarrow{\gamma^\bullet} H(z_1)\boxtimes K(\zeta_1)\xrightarrow{\cong}H(I_1)\boxtimes K(J_1).
\]
Homotopic paths induce the same path-continuation, see \cite[Prop. 2.11]{Gui21}.

In Definition \ref{def: ConnesFusion}, we have defined the Connes fusion of $H $ and $K $ over a pair of disjoint intervals $I,\, J\in \Jcal$ as a certain completion of the vector space $H(I)\otimes K(J)$. The same Hilbert space can be obtained as a completion of the vector spaces $H(I)\otimes K$ and $H\otimes K(J)$. We define $H(I)\boxtimes K$ to be the completion of $H(I)\otimes K$ under the inner product
\[
\langle \xi\otimes\eta\,|\,\xi'\otimes \eta'\rangle := \langle\pi^K_{I}\big(Z(\xi', I)^*Z(\xi, I)\big)(\eta)\,|\, \eta'\rangle
\]
for $\xi, \xi'\in H(I)$ and $\eta, \eta'\in K $, and call it the \emph{Connes fusion of $H $ and $K $ over $I$ on the left}. Then, if $J\in \Jcal$ is an interval disjoint from $I$, the inclusion $H(I)\otimes K(J)\hookrightarrow H(I)\otimes K $ induces a canonical equivalence $H(I)\boxtimes K(J)\xrightarrow{\cong} H(I)\boxtimes K $, by Equation \eqref{eq: OneSidedInnerProd}. For any $z\in S^1$, we define $H(z)\boxtimes K  := \varinjlim\limits_{z\in I}\, H(I)\boxtimes K $. Given a path $\alpha$ in $S^1$ from $z_0$ to $z_1$, we also obtain a path continuation $\alpha^\bullet: H(z_0)\boxtimes K \xrightarrow{\cong} H(z_1)\boxtimes K $ as follows. If $\alpha$ is small enough so that $\alpha([0,1])$ can be covered by an interval $I\in \Jcal$, we define
\[
\alpha^\bullet: H(z_0)\boxtimes K \xrightarrow{\cong} H(I)\boxtimes K \xrightarrow{\cong} H(z_1)\boxtimes K .
\]
For a generic path $\alpha$, we pick a partition $0 = t_0<t_1<\ldots<t_k = 1$ of $[0,1]$ so that each $\alpha_n:=\alpha|_{[t_n, t_{n+1}]}$ is small enough in the sense above, and define $\alpha^\bullet = \alpha_{k-1}^\bullet\ldots\alpha_0^\bullet$. Given two intervals $I_0,I_1\in \Jcal$ and a path $\alpha$ in $S^1$ from a point in $I_0$ to a point in $I_1$, we define
\[
\alpha^\bullet:H(I_0)\boxtimes K \xrightarrow{\cong } H(\alpha(0))\boxtimes K \xrightarrow{\alpha^\bullet} H(\alpha(1))\boxtimes K \xrightarrow{\cong} H(I_1)\boxtimes K .
\]
Homotopic paths induce the same unitary equivalence. Path continuations for Connes fusions over one and two intervals are related by the following result.

\begin{proposition}(\cite[Prop. 2.12]{Gui21})\label{prop: SinglePathContinuationWRTDoublePathContinuation}
    Let $I_0,I_1,J_0,J_1\in \Jcal$ be such that $J_i\subset I_i^{c}$ for $i = 0,1$. Let $\gamma = (\alpha, \beta)$ be a path in $\text{Conf}_2(S^1)$ from $I_0\times J_0$ to $I_1\times J_1$. Then, the path-continuation $\alpha^\bullet: H(I_0)\boxtimes K \xrightarrow{\cong} H(I_1)\boxtimes K $ is given by
    \[
    H(I_0)\boxtimes K \xrightarrow{\cong} H(I_0)\boxtimes K(J_0)\xrightarrow{\gamma^\bullet} H(I_1)\boxtimes K(J_1)\xrightarrow{\cong} H(I_1)\boxtimes K .
    \]
\end{proposition}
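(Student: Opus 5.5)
The plan is to reduce to the case of a single small path and then compose. Both kinds of path continuation are defined as composites over a partition $0=t_0<\dots<t_k=1$. Choose one partition that is fine enough for both at once: each $\gamma_n=\gamma|_{[t_n,t_{n+1}]}$ should have image in some $I^{(n)}\times J^{(n)}$ with $I^{(n)},J^{(n)}\in\Jcal$ disjoint, so that each $\alpha_n$ is then covered by $I^{(n)}$. Refining the partition does not change either continuation, so $\gamma^\bullet=\gamma_{k-1}^\bullet\cdots\gamma_0^\bullet$ and $\alpha^\bullet=\alpha_{k-1}^\bullet\cdots\alpha_0^\bullet$. Consider the intermediate identifications
\[
H(\alpha(t_n))\boxtimes K(\beta(t_n))\cong H(\alpha(t_n))\boxtimes K.
\]
These are induced by the inclusions $H(I)\otimes K(J)\hookrightarrow H(I)\otimes K$, and they are compatible with the directed limits because those inclusions commute with the maps \eqref{eq: CanonicalEquivalenceInclusion}. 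Once they are in place, the telescoping composite shows it suffices to prove the statement for each small piece.

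For a small piece $\gamma_n$ with image in $I\times J$, both continuations pass through the same intermediate space, so the required square decomposes. On the two-interval side the map is
\[
H(\alpha(t_n))\boxtimes K(\beta(t_n))\to H(I)\boxtimes K(J)\to H(\alpha(t_{n+1}))\boxtimes K(\beta(t_{n+1})).
\]
On the one-interval side it is
\[
H(\alpha(t_n))\boxtimes K\to H(I)\boxtimes K\to H(\alpha(t_{n+1}))\boxtimes K.
\]
Every map here is induced on dense subspaces by inclusions of algebraic tensor products: $H(I_1)\otimes K(J_1)\subset H(I)\otimes K(J)\subset H(I)\otimes K$, and $H(I_1)\otimes K(J_1)\subset H(I_1)\otimes K\subset H(I)\otimes K$. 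Both routes send $\xi\otimes\eta$ to $\xi\otimes\eta$, so the diagram commutes on a dense subspace and therefore everywhere by continuity.

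Finally, handle the endpoints. The maps $H(I_0)\boxtimes K\cong H(I_0)\boxtimes K(J_0)$ and $H(z_0)\boxtimes K(\zeta_0)\cong H(I_0)\boxtimes K(J_0)$ are again induced by inclusions of dense tensor subspaces, so they commute with the identifications above. This gives the claimed factorization.

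The main obstacle is bookkeeping rather than analysis: the two path continuations use a priori different partitions and different covering intervals. I would handle this with the stated facts that refinement does not change either continuation and that the limit identifications are canonical, which lets me work with a common refinement throughout. Beyond that, the only analytic input is Equation \eqref{eq: OneSidedInnerProd}, which makes each inclusion-induced map isometric and hence well defined on the completions.
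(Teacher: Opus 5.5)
Your argument is correct. The paper gives no proof of its own and simply cites \cite[Prop.~2.12]{Gui21}; your proof is the natural unwinding of the definitions. You choose a common partition fine enough for $\gamma$, hence for $\alpha$, and then observe that on each small piece every map is induced by inclusions of dense algebraic tensor products such as $H(I_1)\otimes K(J_1)\subset H(I)\otimes K(J)\subset H(I)\otimes K$.

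One small correction: you say Equation \eqref{eq: OneSidedInnerProd} is the only analytic input, but unitarity of $H(I)\boxtimes K(J)\to H(I)\boxtimes K$, not just isometry, also uses density of $K(J)$ in $K$. The paper already records this map as a canonical equivalence, so nothing is missing.
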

Similar properties hold for  $H \boxtimes K(J)$, the \emph{Connes fusion of $H $ and $K $ over $J$ on the right}, defined as the completion of $H \otimes K(J)$ with respect~to
\[
\langle \xi\otimes\eta\,|\,\xi'\otimes\eta'\rangle:=\langle \pi^H_{J}\big(Z(\eta', J)^*Z(\eta, J)\big)(\xi)\,|\,\xi'\rangle
\]
for $\xi, \xi'\in H $ and $\eta,\eta'\in K(J)$. Note that, as in the Connes fusion over two intervals, we have canonical equivalences
\(
H(I)\boxtimes K\xrightarrow{\cong} K\boxtimes H(I). 
\)

\subsection{Endowing the Connes fusion with an action of $\A$}

From now on, we fix disjoint intervals $I,\, J\in \Jcal$, as well as $\A$-representations $H  = (H , \pi^H)$ and $K  = (K , \pi^K)$. We will endow the Hilbert space $H (I)\boxtimes K (J)$ with an action $\pi^{H\boxtimes K}$ of $\A$. Note that we have natural actions of $\A(I )$ and $\A(J)$ on $H (I)\boxtimes K (J)$: given $x\in \A(I)$ and $y\in \A(J)$, we can define
\[
\pi^{H \boxtimes K}_{I}(x)(\xi\otimes \eta) = \Big(\pi_{I}^H(x)(\xi)\Big)\otimes \eta\hspace{2cm}
\pi^{H\boxtimes K}_{J }(y)(\xi\otimes\eta) = \xi\otimes \Big(\pi_{J}^K(y)(\eta)\Big)
\]
for $\xi\otimes\eta \in H(I)\otimes K(J)\subset H(I)\boxtimes K(J) $. This also provides canonical actions of all $\A(L )$ for $L \subset I$ or $L \subset J$.

\begin{theorem}(\cite[Thm. 2.15]{Gui21})\label{thm: RepresentationOnFusion}
    Let $H $ and $K $ be representations of a conformal net $\A$. Let $I,\, J\in\Jcal$ be disjoint intervals. Then,
    \begin{enumerate}
        \item there is a unique action $\pi^l$ of $\A$ on $H(I)\boxtimes K(J)$ such that the following condition holds: for every disjoint intervals $L ,M\in \Jcal$ and any path $\gamma$ in $\text{Conf}_2(S^1)$ from $I\times J$ to $L \times M $, it holds that, for all $x\in \A(L)$ \begin{equation}\label{eq: leftRep}
        \pi_{L}^l(x) = (\gamma^\bullet)^{-1} (\pi_{L }^{H}(x)\boxtimes \id)\gamma^\bullet;
        \end{equation}
        \item  there is a unique action $\pi^r$ of $\A$ on $H(I)\boxtimes K(J)$ such that the following condition holds: for every disjoint $L ,M \in \Jcal$ and any path $\vartheta$ in $\text{Conf}_2(S^1)$ from $I\times J$ to $M \times L$, it holds that, for all $x\in \A(L )$, \begin{equation}\label{eq: rightRep}
        \pi^r_{L }(x) = (\vartheta^\bullet)^{-1} (\id \boxtimes \pi_{L }^{K}(x))\vartheta^\bullet;
        \end{equation}
        \item the actions $\pi^l$ and $\pi^r$ agree, and hence there is a natural action $\pi^{H\boxtimes K} = \pi^l = \pi^r$ of $\A$ on $H(I)\boxtimes K(J)$;
    \item the unitary maps induced by inclusions of intervals and path continuations between Connes fusions intertwine the actions $\pi^{H\boxtimes K}$.
    \end{enumerate}
\end{theorem}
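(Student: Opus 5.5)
The plan is to define $\pi^l$ by the formula \eqref{eq: leftRep} and check that it is well defined and compatible with isotony, then do the same for $\pi^r$, and finally compare the two.

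\textbf{Step 1: well-definedness of $\pi^l$.} For an interval $L$, choose any disjoint $M$ and a path $\gamma$ from $I\times J$ to $L\times M$, and define $\pi^l_L(x)$ by \eqref{eq: leftRep}. Independence of the choice of $(M,\gamma)$ is reduced to a one-variable statement using Proposition \ref{prop: SinglePathContinuationWRTDoublePathContinuation}. Under the identifications $H(I)\boxtimes K(J)\cong H(I)\boxtimes K$ and $H(L)\boxtimes K(M)\cong H(L)\boxtimes K$, the map $\gamma^\bullet$ becomes $\alpha^\bullet$, where $\alpha$ is the first component of $\gamma$. The operator $\pi^H_L(x)\boxtimes\id$ on $H(L)\boxtimes K$ acts only on the left factor, so it commutes with these identifications. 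Thus $\pi^l_L(x)=(\alpha^\bullet)^{-1}(\pi^H_L(x)\boxtimes \id)\alpha^\bullet$, and $\alpha$ is a path in $S^1$ from $I$ to $L$.

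Two such paths $\alpha,\alpha'$ differ by a loop, i.e. by a multiple of the generator of $\pi_1(S^1)$. So it suffices to show the following: if $\alpha$ is a full loop based in $L$, then $\alpha^\bullet$ on $H(L)\boxtimes K$ commutes with $\pi^H_L(x)\boxtimes\id$. I expect this to be the main obstacle, because loops around the circle act nontrivially (they encode monodromy).

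The argument I would try is as follows. Take $\alpha$ winding once, starting in a small interval $L_0\subset L$. Cover the circle by finitely many intervals $L_0,L_1,\dots,L_k=L_0$, with consecutive ones overlapping and all of them except the first and last disjoint from some smaller $L'\subset L_0$. Each step $H(L_i)\boxtimes K\to H(L_{i+1})\boxtimes K$ is induced by the inclusion $H(L_i\cap L_{i+1})\otimes K$, on vectors $\xi\otimes \eta$. Use right-side path continuation: $\pi^H_{L'}(x)\boxtimes \id$ on $H(L_0)\boxtimes K$ corresponds, via $H(L_0)\boxtimes K\cong H(L_0)\boxtimes K(M')$ with $M'$ disjoint, to acting on the left factor. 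Moving the left point around the circle is homotopic in $\text{Conf}_2(S^1)$ to moving the right point the opposite way. During that second motion the left interval stays fixed inside $L_0$, and inclusion maps are equivariant for $\A(L_0)$ acting on the first factor. Hence the loop commutes with $\A(L')$. Density of the $\A(L')$ for $L'\subset L$ (via isotony and strong continuity, or by additivity of a conformal net) then gives commutation with all of $\A(L)$.

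\textbf{Step 2: isotony and the action.} For $L_1\subset L$, the same path followed by the inclusion $H(L_1)\boxtimes K\to H(L)\boxtimes K$ shows $\pi^l_L|_{\A(L_1)}=\pi^l_{L_1}$, since the inclusion unitary intertwines $\pi^H_{L_1}(x)\boxtimes\id$ with $\pi^H_L(x)\boxtimes\id$. The map $\pi^l_L$ is a $*$-homomorphism, being a unitary conjugate of one. Uniqueness is immediate, since the formula \eqref{eq: leftRep} forces the values. Part (2) is symmetric, using the right-sided fusion $H\boxtimes K(J)$.

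\textbf{Step 3: $\pi^l=\pi^r$ and part (4).} For $L$ inside $I$, the constant path gives $\pi^l_L(x)=\pi^H_L(x)\boxtimes\id$. For $\pi^r_L$, take a path moving the $K$-point into $L$ while the $H$-point moves out of the way to $M$ (an exchange). Using a product of intervals in which one factor is fixed, together with locality, I would show that $\pi^r$ and $\pi^l$ agree on $\A(L)$ for $L$ in an open cover of the circle. Since both are representations agreeing on a covering family, isotony then yields equality everywhere; the key point is that the two actions agree on the intervals $I$ and $J$ and on their translates via path continuation. Part (4) follows formally, because path continuations compose, so $\gamma^\bullet$ conjugates the defining formula at one base pair into the one at the other, and inclusion maps are special cases of path continuation.
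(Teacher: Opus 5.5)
\textbf{There is a genuine gap in Step 1, at exactly the point you flagged as the main obstacle.} The paper does not prove this statement; it quotes it from Gui. Your outline is sound: define $\pi^l$ by \eqref{eq: leftRep}, pass to $H(L)\boxtimes K$ via Proposition~\ref{prop: SinglePathContinuationWRTDoublePathContinuation}, and reduce path-independence to showing that the continuation $\ell^\bullet$ along a winding-one loop $\ell$ based in $L$ commutes with $\pi^H_L(x)\boxtimes\id$.

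The argument you give for that last point does not work. In $\text{Conf}_2(S^1)$ neither point can make a full circuit while the other stays in a fixed interval, because they would collide. The projection $(z,\zeta)\mapsto z$ is a fibration with contractible fibre $S^1\setminus\{z\}$. So $\pi_1(\text{Conf}_2(S^1))\cong\Z$ is generated by a rigid rotation of both points, and the two coordinates of any loop have the \emph{same} winding number. Hence there is no homotopy from ``left point around once'' to ``right point around the opposite way with the left interval fixed''.

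Such a homotopy would in fact prove too much. A $\text{Conf}_2$-path whose first coordinate stays in $L_0$ induces the identity on $H(L_0)\boxtimes K$ by Proposition~\ref{prop: SinglePathContinuationWRTDoublePathContinuation}. So $\ell^\bullet$ would be trivial, whereas it is the monodromy $\B_{K,H}\circ\B_{H,K}$ (compare the proof of Theorem~\ref{Thm: RepAutAIsCrossedBalanced}), which is nontrivial whenever $\Rep(\A)$ is not symmetric. If instead you read the second motion inside the right-sided fusion $H\boxtimes K(M)$, then $\pi^H_{L'}(x)\boxtimes\id$ is only defined while $M\cap L'=\emptyset$. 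A full circuit of the right point must cross $L'$, which is precisely where the equivariance you invoke breaks down.

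\textbf{The missing ingredient is an exchange lemma.} Let $L,L_1\in\Jcal$ have disjoint closures, let $\hat L\in\Jcal$ contain $L\cup L_1$, and let $\alpha$ be a path in $\hat L$ from $L$ to $L_1$. Then for $x\in\A(L)$ one has $\alpha^\bullet\circ(\pi^H_L(x)\boxtimes\id)=(\id\boxtimes\pi^K_L(x))\circ\alpha^\bullet$. Here $\id\boxtimes\pi^K_L(x)$ is the operator on $H(L_1)\boxtimes K$, which is well defined because $\pi^K_L(x)$ commutes with $\pi^K_{L_1}(\A(L_1))$ inside $\pi^K_{\hat L}(\A(\hat L))$.

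To prove it, take $\xi\in H(L_1)$. Since $x\in\A(L)\subset\A(L_1^c)$, we get $\pi^H_L(x)\xi=Z(\xi,L_1)\pi_0(x)\Omega$, so $Z(\pi^H_L(x)\xi,\hat L)=Z(\xi,\hat L)\pi_0(x)$. The inner product on $H(\hat L)\boxtimes K$ then gives $\pi^H_L(x)\xi\otimes\eta=\xi\otimes\pi^K_L(x)\eta$.

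The two arcs from $L$ to $L_1$ lie in two different such intervals $\hat L$. Both intertwine $\pi^H_L(x)\boxtimes\id$ with the \emph{same}, path-independent operator $\id\boxtimes\pi^K_L(x)$. Hence the winding-one loop that goes out along one arc and back along the other commutes with $\pi^H_L(x)\boxtimes\id$, and no additivity is needed.

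The same lemma, applied to an exchange path from $L\times M$ to $M'\times L$, gives part (3) directly. Your Step 3 appeals only to ``locality''. What is actually used is the specific identity $\pi^H(x)Z(\xi,L_1)=Z(\xi,L_1)\pi_0(x)$ together with the inner-product computation.
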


Let $\hat{H},\, \hat{K}\in \Rep(\A)$ be representations of $\A$. Recall that, given morphisms of $\A$-representations $F:H \to \hat{H}$ and $G:K \to \hat{K}$ we have defined a bounded map (see Equation \eqref{eq: FusionOfMaps})
\[
F\boxtimes G: H(I)\boxtimes K(J)\to \hat{H}(I)\boxtimes \hat{K}(J).
\]
By \cite[Prop. 2.17]{Gui21}, $F\boxtimes G$ commutes with canonical equivalences given by inclusion and restriction of intervals, and path continuations, and is a morphism of $\A$-representations.

We can define similarly actions of $\A$ on fusions over single intervals. Let $I\in \Jcal$. If $L \in \Jcal$ is a subinterval $L \subset I$, we define the action of $x\in\A(L )$ on $H(I)\boxtimes K $ by $\pi_{L }^{H\boxtimes K}(\xi\otimes \eta) = \pi_{L }^H(x)(\xi)\otimes \eta$ for all $\xi\in H(I)$ and $\eta\in K $. For a generic interval $L \in\Jcal$, we choose a path $\alpha$ in $S^1$ from $I$ to $L $, and define $\pi_{L }^{ H\boxtimes K}(x) = (\alpha^\bullet)^{-1}\circ \big(\pi_{L }^H(x)\boxtimes\id\big)\circ\alpha^\bullet$. This is independent of the path chosen. Alternatively, one can define the action of $\A$ on $H(I)\boxtimes K $ via the canonical equivalence $H(I)\boxtimes K(J)\cong H(I)\boxtimes K $ for some $J\in \Jcal$ with $J\subset I^{c}$ and the action of $\A$ on $H(I)\boxtimes K(J)$ from Theorem \ref{thm: RepresentationOnFusion}. These two actions agree.

Given a third representation $R\in \Rep(\A)$ and disjoint intervals $I,\, J\in\Jcal$, we can consider the representation $(H(I)\boxtimes K)\boxtimes R(J)$, obtained as the Connes fusion of $H(I)\boxtimes K$ and $R$ over $J$ on the right. Similarly, we can consider $H(I)\boxtimes (K\boxtimes R(J))$. By \cite[Sec. 2.5]{Gui21}, the map $(H(I)\otimes K)\otimes R(J)\to H(I)\otimes (K\otimes R(J))$ given by $(\xi\otimes \eta)\otimes \chi\to \xi\otimes(\eta\otimes \chi)$ induces a unitary equivalence of $\A$-representations
\begin{equation}\label{eq: Associator}
(H(I)\boxtimes K)\boxtimes R(J)\xrightarrow{\cong} H(I)\boxtimes (K\boxtimes R(J)).
\end{equation}

\subsection{Conformal structures}

The fact that any representation of a $\A$ is conformal covariant will provide the balance in Section \ref{sec: balance}. We recall here how to produce this conformal structure.

Let $q: \R\to S^1$ be the map $q(t) = e^{2\pi i t}$ and write $\widetilde{\Diff^+}(S^1) = \{(\widetilde{f}, f)\in \Diff(\R)\times \Diff^+(S^1)\,|\,q\circ \tilde{f} = f\circ q\}$ for the universal cover of $\Diff^+(S^1).$ We can lift the projective representation $U$ of $\Diff^+(S^1)$ on $H_0$ to a projective representation of $\widetilde{\Diff^+}(S^1)$ on $H_0$, which we continue denoting by $U$, given by $U(\widetilde{f},f) = U(f)$. Denoting the image of a unitary $V\in U(H_0)$ in $\mathcal{P}U(H_0)$ by $[V]$, we define the topological group
\[
\Diff_\A^+(S^1) = \{(\widetilde{f},f,V)\in \widetilde{\Diff^+}(S^1)\times U(H_0)\,|\,[V]= U(\widetilde{f},f)\}.
\]
The group $\Diff_\A^+(S^1)$ inherits a topology from that of $\widetilde{\Diff^+}(S^1)\times U(H_0)$, and is a central extension of $\widetilde{\Diff^+}(S^1)$ by $U(1)\cong \{V\in U(H_0)\,|\, [V] = [\id]\}$. A unitary action of $\Diff_\A^+(S^1)$ on a Hilbert space $H$ is, by definition, a continuous homomorphism $\Diff_\A^+(S^1)\to U(H)$ such that the central $U(1)$ acts in the standard way. There is a canonical unitary action of $\Diff_\A^+(S^1)$ on $H_0$ given by $(\widetilde{f},f,V)\in \Diff_\A^+(S^1)\mapsto V\in U(H_0)$. We continue denoting this action~by~$U$. 

Given an interval $I\in \Jcal$, we denote by $\Diff_I(S^1)$ the subgroup of $\Diff^+(S^1)$ of diffeomorphisms with support in $I$. We denote by $\widetilde{\Diff_I}(S^1)$ the connected branch of the preimage of $\Diff_I(S^1)$ in $\widetilde{\Diff^+}(S^1)$ containing the identity, and by $\Diff^+_\A(I)$ the preimage of $\widetilde{\Diff_I}(S^1)$ in $\Diff_\A^+(S^1).$ By conformal covariance of $\A$, given $(\widetilde{f},f,V)\in \Diff_\A^+(I)$, we have $U(\widetilde{f},f,V)\in \Hom_{\A(I^c)}(H_0, H_0)\cong \A(I)$. We will write $U(\widetilde{f},f,V)\in \A(I)$. These elements form the Virasoro subnet inside $\A$.

\begin{theorem}(\cite[Thm. 2.2]{Gui21})\label{Thm: RepIsConformal}
    Let $H \in \Rep(\A)$ be a representation of $\A$. Then, $H$ is conformal covariant in the sense that there exists a unique unitary action $U^H$ of $\Diff^+_\A(S^1)$ on $H $ such that, for all $I\in \Jcal$ and $(\tilde{f},f,V)\in \Diff_\A^+(I)$, it holds that
    \[
    U^H(\widetilde{f},f,V) = \pi^H_I(U(\widetilde{f},f,V)).
    \]
\end{theorem}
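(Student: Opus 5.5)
The plan is to build $U^H$ locally from the actions of the local Virasoro groups and then glue. Uniqueness is immediate once existence is known. The group $\Diff^+_\A(S^1)$ is generated by the subgroups $\Diff^+_\A(I)$, $I\in\Jcal$: indeed $\widetilde{\Diff^+}(S^1)$ is connected and generated by elements supported in small intervals, via a fragmentation lemma using a partition of unity on vector fields. Any two continuous homomorphisms agreeing on these subgroups therefore coincide.

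For existence, I would first define a local action. For each $I$, set $U^H_I := \pi^H_I\circ U|_{\Diff^+_\A(I)}$. Since $U$ restricted to $\Diff^+_\A(I)$ lands in $\A(I)$ and $\pi^H_I$ is a normal $*$-homomorphism, this is a continuous unitary homomorphism on which the central $U(1)$ acts by scalars. Isotony of $H$ gives compatibility: $U^H_I=U^H_{I'}$ on $\Diff^+_\A(I)\cap \Diff^+_\A(I')$ whenever $I\subset I'$. The same holds for overlapping intervals, after enlarging to a common interval when $I\cup I'$ is still an interval.

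Next I would extend from local to global. The task is to show that the family $\{U^H_I\}$ extends to a homomorphism on the group generated by the local subgroups. The cleanest route is to present $\Diff^+_\A(S^1)$ as the colimit (amalgamated free product) of the subgroups $\Diff^+_\A(I)$ over the poset of intervals, modulo the relations coming from inclusions. This is the covering-group analogue of a result of Henriques on presenting diffeomorphism groups of $S^1$ from local pieces. Given that presentation, the compatible family $U^H_I$ defines a homomorphism $U^H$ on the abstract amalgam, and it descends to $\Diff^+_\A(S^1)$. Continuity follows because each element has a neighbourhood of products of boundedly many local elements depending continuously on it.

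I expect the main obstacle to be this descent: verifying that every relation holding in $\Diff^+_\A(S^1)$ is already generated by local relations. This is where the universal cover and the central extension enter, since loops such as the full rotation $2\pi$ cannot be expressed locally in a trivial way. I would handle it as follows. Choose a unitary $W\in\Hom_{\A(I^c)}(H_0,H)$ for a fixed $I$. Then on $\Diff^+_\A(I^c)$-elements, $U^H$ and $W U W^*$ agree by construction, because $W$ intertwines $\pi_{I^c}$. Any relation among local generators can be reduced, by conjugating with small elements, to one involving only elements supported in $I^c$ together with elements supported in a slightly larger interval. On such a relation one compares with the honest representation $U$ on $H_0$, transported by the local intertwiners $W$ attached to several overlapping intervals.

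Alternatively, one may define $U^H$ on a small identity neighbourhood as a local homomorphism and then extend it uniquely to the simply connected-type cover. This second method uses that $\Diff^+_\A(S^1)$ is a central extension of a simply connected group, so local homomorphisms extend (a Lie-group-type monodromy argument). Central $U(1)$ acts by scalars since it does so in each $\A(I)$.
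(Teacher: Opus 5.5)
The paper gives no argument of its own here: the statement is quoted from \cite[Thm. 2.2]{Gui21}, and the introduction credits \cite{MR2078164} and \cite{Hen19}. Your skeleton is the right one: uniqueness by generation, local actions $\pi^H_I\circ U|_{\Diff^+_\A(I)}$, compatibility under inclusions by isotony, and gluing over the poset of intervals. The gap is the descent step, which is the only nontrivial input, and your sketch does not establish it.

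Descent is a statement about the group alone. It says that every relation in $\Diff^+_\A(S^1)$ among local elements follows from relations inside single subgroups $\Diff^+_\A(I)$ together with the inclusions; in other words, $\Diff^+_\A(S^1)$ is the colimit of the $\Diff^+_\A(I)$. Relations inside one $\Diff^+_\A(I)$ already hold on $H$, since $\pi^H_I\circ U$ is a homomorphism there, so intertwiners $W$ add nothing for them.

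For genuinely global relations, transport from $H_0$ cannot work. The vacuum $U$ is a projective representation of $\Diff^+(S^1)$ itself. Take a word $h_1\cdots h_m$ with $h_k\in\Diff^+_\A(I_k)$ representing $\widetilde{\exp}(2\pi i L_0)$. Then $U(h_1)\cdots U(h_m)$ is a scalar on $H_0$. On $H$, however, $\pi^H_{I_1}(U(h_1))\cdots\pi^H_{I_m}(U(h_m))$ is a phase times $e^{2\pi i L_0}=\theta_H^{-1}$, which in general is not a scalar.

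So any comparison with $H_0$ must detect how the word winds around the circle. Reducing ``by conjugating with small elements'' to something localized near one interval cannot do this, for two reasons. Conjugation does not shrink a family of supports that covers $S^1$. And such a word must have supports covering $S^1$, because elements of $\widetilde{\Diff_I}(S^1)$ fix the preimage of $S^1\setminus I$ pointwise, while a deck translation of $\R$ fixes no point.

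The repair is to cite \cite{Hen19} directly. Henriques proves that the universal cover of $\Diff^+(S^1)$ and its central $U(1)$-extensions are colimits of their subgroups localized in intervals. So no ``covering-group analogue'' has to be developed, and your compatible family then induces $U^H$; this is the route the paper's references point to.

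Your monodromy alternative is in the spirit of \cite{MR2078164}, but as written it is only an outline, with two missing pieces.
\begin{enumerate}
\item The local homomorphism is never constructed. An element near the identity is not supported in any interval, so you must build it from a fragmentation $g=g_1\cdots g_n$ depending continuously on $g$. You then need independence of the fragmentation: an element supported in $I\cap I'$ splits along the components of $I\cap I'$, and isotony applies to each piece. You also need local multiplicativity, which requires moving local factors past one another. Neither is addressed.
\item $\Diff^+_\A(S^1)$ is not simply connected. It is a central $U(1)$-extension of the contractible group $\widetilde{\Diff^+}(S^1)$, and its fundamental group is generated by the central loop $t\mapsto e^{2\pi i t}$. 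So ``local homomorphisms extend'' does not follow directly. You must add that this central loop has trivial monodromy, which holds because the centre acts through the standard character.
\end{enumerate}
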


\begin{corollary}(\cite[Cor. 2.6]{Gui21})\label{cor: RepOfDiffA}
    For any $(\widetilde{f},f,V)\in \Diff^+_\A(S^1)$, it holds that $U^H(\widetilde{f},f,V)\in \bigvee\limits_{I\in \Jcal}\pi^H_I(\A(I))$, and for every $I\in \Jcal$ and $x\in \A(I)$, it holds that
    \[
    U^H(\widetilde{f},f,V)\circ \pi_{I }^H(x)\circ U^H(\widetilde{f},f,V)^* = \pi_{fI }^H\big(U(\widetilde{f},f,V)\, x \,U(\widetilde{f},f,V)^*\big). 
    \]
\end{corollary}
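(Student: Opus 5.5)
The plan is to reduce both assertions to Theorem \ref{Thm: RepIsConformal} by factoring an arbitrary element of $\Diff^+_\A(S^1)$ into elements localized in small intervals.

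\emph{Factorization.} First I show that $\Diff^+_\A(S^1)$ is generated by the subgroups $\Diff^+_\A(I)$, $I\in\Jcal$, and moreover by those with $I$ of arbitrarily small length. The group $\Diff^+(S^1)$ is connected, and by the fragmentation lemma every element near the identity is a finite product of diffeomorphisms supported in intervals of prescribed small length. Lifting each factor along the connected branch $\widetilde{\Diff_I}(S^1)$ and then to $\Diff^+_\A(I)$, the product of the lifts differs from the original element of $\Diff^+_\A(S^1)$ only by an element of the kernel of the covering. Near the identity this kernel element is a central scalar in $U(1)$. Since $U(1)\subset \Diff^+_\A(I)$ for every $I$, it can be absorbed into one factor. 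A neighbourhood of the identity generates the connected group $\Diff^+_\A(S^1)$, and the lift of rotation by $2\pi$ is reached as a product of small rotations. Hence every $g\in \Diff^+_\A(S^1)$ is a product $g=g_1\cdots g_n$ with $g_k\in \Diff^+_\A(I_k)$. By Theorem \ref{Thm: RepIsConformal}, $U^H(g)=\prod_k \pi^H_{I_k}(U(g_k))$, which lies in $\bigvee_I \pi^H_I(\A(I))$. This proves the first claim.

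\emph{Covariance for one localized factor.} Let $g=(\tilde f,f,V)\in\Diff^+_\A(K)$ with $K$ short, and let $x\in\A(I)$. Suppose first that $I$ is also short, so that $K\cup I\cup fI$ lies in some interval $L\in\Jcal$; note $fI\subset I\cup K$. Then $U^H(g)=\pi^H_L(U(g))$ and $\pi^H_I(x)=\pi^H_L(x)$, because $\pi^H_L$ restricts to $\pi^H_I$. Since $\pi^H_L$ is a homomorphism,
\[
U^H(g)\pi^H_I(x)U^H(g)^*=\pi^H_L\big(U(g)xU(g)^*\big)=\pi^H_{fI}\big(U(g)xU(g)^*\big),
\]
where the last step uses $U(g)xU(g)^*\in\A(fI)$ by conformal covariance together with isotony of $\pi^H$.

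For a general $I$, both sides of the desired identity are normal $*$-homomorphisms in $x$: the left side is $\mathrm{Ad}\,U^H(g)\circ\pi^H_I$, and the right side is $\pi^H_{fI}\circ \mathrm{Ad}\,U(g)$. It therefore suffices to check the identity on a generating family. By weak additivity of conformal nets, $\A(I)$ is generated by $\A(I')$ for short subintervals $I'\subset I$, and for those the previous computation applies.

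\emph{Products.} If the formula holds for $g_1$ and $g_2$, then it holds for $g_1g_2$. Indeed $U^H(g_1g_2)=U^H(g_1)U^H(g_2)$ and $U(g_1g_2)=U(g_1)U(g_2)$, so applying the formula for $g_2$ on $I$ and then for $g_1$ on $f_2I$ gives the formula for $g_1g_2$ on $I$. Combined with the factorization, this proves the covariance identity for all $g\in\Diff^+_\A(S^1)$.

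The main obstacle is the factorization step. One must make precise that fragmentation works inside the central extension of the universal cover, choosing lifts in the identity branches $\widetilde{\Diff_I}(S^1)$ and controlling the central ambiguity. The support sizes must also be chosen small relative to the intervals $I'$ so that $K\cup I'\cup fI'$ is never the whole circle.
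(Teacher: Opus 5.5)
Your proof is correct and follows essentially the same route as the cited result in Gui's paper, which this paper quotes without proof. That route is: generate $\Diff^+_\A(S^1)$ by the localized subgroups $\Diff^+_\A(I)$ with $I$ short, compute inside a single $\pi^H_L$ for localized $g$ and short $I$, pass to general $I$ using normality and additivity, and note that the covariance identity is closed under products. One terminological correction: the property you actually use, $\A(I)=\bigvee_{I'}\A(I')$ over short subintervals $I'$ covering $I$, is usually called \emph{additivity} (a known theorem for M\"obius covariant nets), whereas \emph{weak additivity} normally refers to the weaker statement that the M\"obius translates of a single interval generate $\A(S^1)$.
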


The Lie algebra of $\Diff^+(S^1)$ is the Lie algebra $\Vect(S^1)$ of vector fields on $S^1$ with the negative of the Lie bracket of vector fields. We let $\exp: \Vect(S^1)\to \Diff^+(S^1)$ be the exponential map and write $\Vect_{\C}(S^1)$ for the complexification of $\Vect(S^1)$. For $n\in \Z$, we define the complex vector field $L_n(e^{i\theta}):=-ie^{in\theta}\frac{d}{d\theta}\in \Vect_{\C}(S^1)$. These vectors $L_n$ form the Witt algebra $\mathscr{W}$, which is a dense Lie subalgebra of $\Vect_{\C}(S^1).$ We define a $*$-structure on $\mathscr{W}$ by setting $L_n^* = L_{-n}$. Then, a vector $X\in \mathscr{W}$ is self-adjoint if and only if $iX\in \Vect(S^1)$. For a self-adjoint vector $X\in \Vect_\C(S^1)$, we write $\exp_{iX}$ for the one-parameter subgroup of $\Diff^+(S^1)$ given by $t\in\R\mapsto \exp(itX)$, so that $\exp_{iL_0}$ is the subgroup of rotations of $S^1$. We also define $\widetilde{\exp}_X: \R\to \widetilde{\Diff^+}(S^1)$ for the one-parameter subgroup of $\widetilde{\Diff^+}(S^1)$ lifting $\exp_X$, and $\widetilde{\exp}(X):=\widetilde{\exp}_X(1)$.

Note that $\widetilde{\Diff^+}(S^1)$ contains the universal covering space $\widetilde{\Mob}$ of $\Mob$, which is generated by $\widetilde{\exp}(iX)$ where $X = \overline{a_{1}}L_{-1} + a_0L_0 + a_1 L_1\in\Vect_{\C}(S^1)$ with $a_0\in\R$ and $a_1\in\C$. The unitary projective representation $U^H$ of $\widetilde{\Diff^+}(S^1)$ on $H $ restricts to a unitary projective representation of $\widetilde{\Mob}$. By \cite{MR58601}, such a projective representation lifts to a unique unitary representation of $\widetilde{\Mob}$ on $H $. Given $X$ as above, we write $e^{iX}$ for the action of $\widetilde{\exp}(iX)\in \widetilde{\Mob}$ on $H $.

For the rest of this section, we will describe the conformal structure of the Connes fusion $H(I)\boxtimes K$ of two $\A$-representations $H$ and $K$ over an interval $I\in \Jcal$. The arguments are taken from \cite[Sec. 2.4]{Gui21}. Let $(\tilde{f},f,V)\in \Diff_A^+(S^1)$ and choose a map $\lambda:[0,1]\to \Diff_A^+(S^1)$ from $1 = (\id,\id,1)$ to $(\tilde{f},f,V).$ We require that $\lambda$ descends to a continuous path $[\lambda]:[0,1]\to \widetilde{\Diff^+}(S^1)$. The homotopy class of $[\lambda]$ is uniquely determined by $(\tilde{f},f,V)$. Given $I\in\Jcal$ and $z\in I$, the map
\[
\begin{array}{cccc}
    \lambda_z: &[0,1]&\to &S^1  \\
     & t&\mapsto & \text{proj}_{\Diff(\S^1)}(\lambda(t))(z)
\end{array}
\]
is a path from $I$ to $fI$, where $\text{proj}_{\Diff(S^1)}:\Diff_A^+(S^1)\to\widetilde{\Diff^+}(S^1) \to \Diff(S^1)$ is the projection.

We define the unitary map $U_0^{H\boxtimes K}(\tilde{f},f,V):H(I)\boxtimes K \xrightarrow{\cong} H(I)\boxtimes K $ as the completion~of
\begin{align}\label{eq: ConformalStructOnConnesFusion0}
U_0^{H\boxtimes K}&(\tilde{f},f,V)(\xi\otimes\eta) \\&=\nonumber (\lambda_z^\bullet)^{-1}\big(U^H(\tilde{f},f,V)\circ Z(\xi,I)\circ U(\tilde{f},f,V)^{-1}(\Omega)\otimes U^K(\tilde{f},f,V)(\eta)\big)
\end{align}
for any $\xi\otimes \eta\in H(I)\otimes K $. This unitary is well-defined by \cite[Sec. 2.4]{Gui21}.

\begin{theorem}(\cite[Thm. 2.21]{Gui21})\label{Thm: ConformalStructureOfFusion}
    Let $H,\, K \in \Rep(\A)$ be representations of $\A.$ Then, for any $I\in\Jcal$, the unitary representation $U^{H\boxtimes K}$ of $\Diff^+_\A(S^1)$ defining the conformal structure of $H(I)\boxtimes K $ is given by the action $U^{H\boxtimes K}_0$.
\end{theorem}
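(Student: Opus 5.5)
The plan is to reduce the statement to the uniqueness clause of Theorem \ref{Thm: RepIsConformal}, applied to the representation $H(I)\boxtimes K$ equipped with the action $\pi^{H\boxtimes K}$ of $\A$. It then suffices to prove three things:
\begin{enumerate}
\item $U_0^{H\boxtimes K}$ is a well-defined unitary action of $\Diff^+_\A(S^1)$. This means it is independent of the auxiliary choices, multiplicative, strongly continuous, and the central $U(1)$ acts by scalars.
\item For every $L\in\Jcal$ and $(\tilde f,f,V)\in\Diff^+_\A(L)$ we have $U_0^{H\boxtimes K}(\tilde f,f,V)=\pi^{H\boxtimes K}_L(U(\tilde f,f,V))$.
\item Once (1) and (2) hold, uniqueness forces $U^{H\boxtimes K}=U_0^{H\boxtimes K}$.
\end{enumerate}

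First I would establish a covariance lemma. By Corollary \ref{cor: RepOfDiffA}, the operator $U^H(\tilde f,f,V)\,Z(\xi,I)\,U(\tilde f,f,V)^{*}$ intertwines $\A((fI)^c)$. Hence the vector $\xi_f:=U^H(\tilde f,f,V)Z(\xi,I)U(\tilde f,f,V)^{-1}\Omega$ lies in $H(fI)$, and $Z(\xi_f,fI)$ is exactly this conjugate. Consequently the inner product of $\xi_f\otimes U^K\eta$ in $H(fI)\boxtimes K$ is
\[
\langle U^K\pi^K_{I}(Z(\xi',I)^*Z(\xi,I))(U^K)^{*}U^K\eta\,|\,U^K\eta'\rangle,
\]
again by Corollary \ref{cor: RepOfDiffA}. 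So the map $\xi\otimes\eta\mapsto \xi_f\otimes U^K\eta$ is an isometry $\Phi_f:H(I)\boxtimes K\to H(fI)\boxtimes K$, and it is surjective since $f$ is invertible. The same computation on subintervals shows that $\Phi_f$ intertwines the inclusion equivalences. It follows that $\Phi_f\circ\alpha^\bullet=(f\circ\alpha)^\bullet\circ\Phi_f$ for any path $\alpha$ in $S^1$.

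With this naturality in hand, the definition $U_0=(\lambda_z^\bullet)^{-1}\Phi_f$ is independent of $z\in I$, because changing $z$ inside $I$ changes $\lambda_z$ by a homotopy with endpoints in $I$ and $fI$. It is independent of $\lambda$ because the homotopy class of $[\lambda]$ is fixed. It is also compatible with replacing $I$ by another interval via path continuation, and I expect this last compatibility to be the most delicate bookkeeping. For multiplicativity, take a path $\lambda$ for $g$ followed by $t\mapsto \lambda'(t)\cdot g$, where $\lambda'$ is a path for $f$. The resulting point path is $\lambda_z$ concatenated with $\lambda'_{g(z)}$. Using $\Phi_{fg}=\Phi_f\Phi_g$ and the naturality above, we get $U_0(fg)=U_0(f)U_0(g)$. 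The central $U(1)$ acts by its scalar because both $U^H$ and $U$ contain that same scalar, which cancels between them, while $U^K$ contributes it once. Strong continuity follows from continuity on the dense set of vectors $\xi\otimes\eta$ together with unitarity; for $f$ near the identity the path $\lambda_z$ can be taken inside a single interval.

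For step (2), fix $L$ and first use the independence just proven to move $I$, by path continuation, to an interval disjoint from $L$. This is legitimate because path continuations intertwine $\pi^{H\boxtimes K}$ by Theorem \ref{thm: RepresentationOnFusion}(4). Choose $\lambda$ inside $\Diff^+_\A(L)$, which is possible since the relevant branch is connected. Then $\lambda_z$ is constant, so $\lambda_z^\bullet=\id$. Moreover $U(\tilde f,f,V)\in\A(L)\subset\A(I^c)$, and $Z(\xi,I)$ intertwines $\A(I^c)$, so $U^H(\tilde f,f,V)Z(\xi,I)U(\tilde f,f,V)^*=Z(\xi,I)$ by Theorem \ref{Thm: RepIsConformal}. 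Hence
\[
U_0(\tilde f,f,V)(\xi\otimes\eta)=\xi\otimes\pi^K_L(U(\tilde f,f,V))\eta .
\]
Passing through $H(I)\boxtimes K\cong H(I)\boxtimes K(J)$ with $J\supset L$ disjoint from $I$, this is precisely the right action $\pi^r_L=\pi^{H\boxtimes K}_L$ of Theorem \ref{thm: RepresentationOnFusion}, using the trivial path. Uniqueness in Theorem \ref{Thm: RepIsConformal} then concludes the proof.

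The main obstacle is the well-definedness and multiplicativity in step (1), specifically the naturality $\Phi_f\alpha^\bullet=(f\alpha)^\bullet\Phi_f$ and its compatibility across all the canonical identifications.
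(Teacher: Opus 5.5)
Your proposal is correct and is essentially the argument of \cite[Sec.~2.4]{Gui21}, which the paper cites rather than reproves. That argument defines $U_0^{H\boxtimes K}$ from the transport unitary $\Phi_f$ and path continuation, shows it is a unitary representation of $\Diff^+_\A(S^1)$ agreeing with $\pi^{H\boxtimes K}_L(U(\tilde f,f,V))$ on every $\Diff^+_\A(L)$, and concludes by the uniqueness in Theorem~\ref{Thm: RepIsConformal}; the compatibility with changing $I$ that you left as bookkeeping follows exactly like your multiplicativity check, from $\Phi_f\alpha^\bullet=(f\circ\alpha)^\bullet\Phi_f$ together with the homotopy $(s,t)\mapsto\lambda(t)(\alpha(s))$ between $\lambda_{\alpha(0)}$ followed by $f\circ\alpha$ and $\alpha$ followed by $\lambda_{\alpha(1)}$.
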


\subsection{The braided $\mathrm{W}^*$-structure}

In this section, we recall how to equip the category $\Rep(\A)$ with the structure of a braided $\mathrm{W}^*$-tensor category following \cite[Sec. 2.6]{Gui21}.

Let $S_-^1$ and $S_+^1$ denote the lower and upper hemispheres of $S^1$ respectively. Given $\A$-representations $H,\, K \in \Rep(\A)$, we define their tensor product to be $$H \boxtimes K :=H(S^1_-)\boxtimes K(S^1_+).$$ We also identify $H \boxtimes K $ with $H(S^1_-)\boxtimes K $ and $H \boxtimes K(S^1_+)$ using the canonical equivalences. We extend this tensor product linearly to obtain a functor
\[
\boxtimes: \Rep(\A)\times \Rep(\A)\to \Rep(\A).
\]

\begin{remark}\label{Rk: Opposite}
    In \cite{Gui21}, the author defines the Connes fusion to be $H\boxtimes K = H(S^1_+)\boxtimes K(S^1_-)$. We define the monoidal opposite category to Gui. Similarly, we will produce the reverse braiding to his. We do this so that our results agree with those in \cite{GcrossedbraidedRep}. There, we extend the results of Gui to the setting where there is a group $G$ acting on the conformal net $\A$. This produces a $G$-crossed braided tensor category, as opposed to a braided tensor category. While in the braided context it is a choice to take the braided category of Gui or its opposite, that choice disappears in the $G$-crossed context. Restricting the $G$-crossed case to $G = \{e\}$ the trivial group, we obtain the choice we make here, as opposed to the one in \cite{Gui21}.
\end{remark}

Let us denote $S^1_-$ and $S^1_+$ by $-$ and $+$ in the Connes fusions. Given $H ,K ,R \in\Rep(\A)$ three representations of $\A$, we define the associator $(H \boxtimes K )\boxtimes R \xrightarrow{\cong}H \boxtimes (K \boxtimes R )$ to~be
\[
(H \boxtimes K )\boxtimes R \cong (H(-)\boxtimes K )\boxtimes R(+)\xrightarrow{\cong } H(-)\boxtimes (K \boxtimes R(+)) {\cong}   H \boxtimes (K \boxtimes R ),
\]
where we use the unitary equivalence \eqref{eq: Associator}. The pentagon axioms can be easily verified, see \cite[Fig. 2.3]{Gui21}. The unit object is $H_0$ and the unitors are described in \cite[p.26]{Gui21}. 

We now introduce the braiding on $\Rep(\A)$. Let $\varrho:[0,1]\to S^1$ be the path $\varrho(t) = e^{\pi i (t-\nicefrac{1}{2})}$, which goes from $S_-^1$ to $S_+^1$. Given representations $H,\, K \in\Rep(\A)$, we define the braid operator $\B_{H,K}: H \boxtimes K \xrightarrow{\cong} K \boxtimes H $~by
\[
\B_{H,
K}: H \boxtimes K  = H(-)\boxtimes K \xrightarrow{\varrho^\bullet} H(+)\boxtimes K \xrightarrow{\cong} K \boxtimes H (+) = K \boxtimes H,
\]
using the equivalence \eqref{eq: Swap}. The braid operators are clearly natural in $H $ and $K $, and they satisfy the hexagon equations by \cite[Thm. 3.8]{Gui21}. Hence, we obtain the following result.

\begin{theorem}(\cite[Thm. 3.9]{Gui21})
     Let $\A$ be a conformal net. The $\mathrm{W}^*$-category $\Rep(\A)$ of representations of $\A$, equipped with the Connes fusion of representations and the braiding $\B$, is a braided $\mathrm{W}^*$-tensor category.
\end{theorem}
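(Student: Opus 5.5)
The statement is the result of \cite[Thm. 3.9]{Gui21}, transported to the monoidal opposite of Gui's conventions (Remark \ref{Rk: Opposite}). My plan is to verify the axioms of a braided $\mathrm{W}^*$-tensor category one at a time. Throughout I use a single principle: every structural map (associator, unitors, swap \eqref{eq: Swap}, inclusions \eqref{eq: CanonicalEquivalenceInclusion}, path continuations) is induced by a simple formula on algebraic tensors of bounded vectors. So two composites of such maps agree as soon as they agree on a dense subspace.

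\textbf{Step 1: the functor $\boxtimes$ and the $\mathrm{W}^*$-structure.} Each $\End_{\Rep(\A)}(H)$ is the commutant of $\bigcup_I\pi_I^H(\A(I))$, hence a von Neumann algebra, so $\Rep(\A)$ is a $\mathrm{W}^*$-category. By \cite[Prop. 2.17]{Gui21}, for morphisms $F,G$ the map $F\boxtimes G$ of \eqref{eq: FusionOfMaps} is bounded, intertwines the action of Theorem \ref{thm: RepresentationOnFusion}, and commutes with inclusions and path continuations. The identities $(F\boxtimes G)^*=F^*\boxtimes G^*$ and functoriality follow by checking against vectors $\xi\otimes\eta$ using \eqref{eq: OneSidedInnerProd}. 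Normality of $F\mapsto F\boxtimes G$ follows because it is spatially implemented on a dense subspace.

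\textbf{Step 2: associator, unitors, pentagon and triangle.} The associator is the unitary \eqref{eq: Associator}. It is an $\A$-intertwiner because, on both sides, the actions of $\A(L)$ for $L\subset S^1_-$ and $L\subset S^1_+$ are visibly preserved, and Theorem \ref{thm: RepresentationOnFusion}(1)(2) determines the full action from these. Naturality is immediate from \eqref{eq: FusionOfMaps}. For the pentagon, all five maps reduce on the dense subspace $H(-)\otimes K\otimes R\otimes S(+)$ to rebracketing, so both composites agree there and hence everywhere. The left unitor is $H_0(-)\boxtimes K\to K$, $\xi\otimes\eta\mapsto\pi^K_-(Z(\xi,-))\eta$, with $Z(\xi,-)$ regarded as an element of $\A(S^1_-)$. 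It is isometric by \eqref{eq: OneSidedInnerProd}, and it is surjective since $\A(S^1_-)\Omega$ is dense. The right unitor is defined symmetrically. The triangle identity is again a density check.

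\textbf{Step 3: braiding and hexagons.} Naturality of $\B$ follows because $F\boxtimes G$ commutes with path continuations and with the swap. For the first hexagon I compare $\B_{H,K\boxtimes R}$ with $(\id_K\boxtimes\B_{H,R})\circ\B_{H,K}$, modulo associators. The idea is that $\varrho^\bullet$ on $H(-)\boxtimes(K\boxtimes R(+))$ moves the $H$-point across the whole block $K\boxtimes R$. I will shrink the supports so that $K$ sits in a small interval near $1$ and $R$ near the top of the circle. Then I split $\varrho$, up to homotopy, into two arcs: one crossing the point of $K$ and one crossing the point of $R$. Using Proposition \ref{prop: SinglePathContinuationWRTDoublePathContinuation} and homotopy invariance \cite[Prop. 2.11]{Gui21}, each arc becomes a path continuation in a two-factor fusion, and these two continuations are exactly the factors $\B_{H,K}$ and $\id_K\boxtimes \B_{H,R}$. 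The second hexagon is handled symmetrically, moving $H\boxtimes K$ across $R$.

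\textbf{Main obstacle.} I expect the hard part to be the lemma needed inside Step 3: path continuation of the first factor in $H(I)\boxtimes(K\boxtimes R(J))$ commutes with the associator \eqref{eq: Associator}, and with the identification of $K\boxtimes R$ as a fusion over moved intervals, whenever the path avoids $J$. My approach is to reduce to short paths contained in a single interval, using the defining partition of path continuations. For such short paths, both maps are induced by inclusions of bounded-vector subspaces, so they commute on the dense subspace.
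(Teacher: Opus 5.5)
\textbf{Overall.} The paper gives no argument of its own for this statement. It imports everything from \cite{Gui21}: the pentagon from \cite[Fig. 2.3]{Gui21}, the unitors from \cite[p.26]{Gui21}, the intertwining property of \eqref{eq: Associator} from \cite[Sec. 2.5]{Gui21}, and the hexagons from \cite[Thm. 3.8]{Gui21}. It only remarks that everything passes to the monoidal opposite. Your self-contained verification by path continuations is a different and viable route. The lemma you isolate, that continuation of the first factor commutes with \eqref{eq: Associator} along paths avoiding $J$, is the right tool. But two steps are wrong as stated, and the second is where the hexagon actually lives.

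\textbf{Step 2.} Theorem \ref{thm: RepresentationOnFusion} does not determine the action on a fusion from the subalgebras $\A(L)$ with $L\subset S^1_\pm$. It determines $\pi_L$ by conjugating with path continuations. Since no strong additivity is assumed, nothing guarantees that $\pi(\A(L))$ for $L\ni\pm1$ lies in the von Neumann algebra generated by the actions of $\A(S^1_-)$ and $\A(S^1_+)$. So ``visibly preserved on $S^1_\pm$'' is not enough. The fix is cheap. Every $L\in\Jcal$ lies either in $S^1\setminus\bar J$ for some small $J\subset S^1_+$, or in $S^1\setminus\bar I$ for some small $I\subset S^1_-$.
\begin{itemize}
\item In the first case, write both sides as $(H(S^1\setminus\bar J)\boxtimes K)\boxtimes R(J)$ and $H(S^1\setminus\bar J)\boxtimes(K\boxtimes R(J))$. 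There $\A(L)$ acts on the $H$-factor on both sides.
\item In the second case, argue symmetrically, with $\A(L)$ acting on the $R$-factor.
\end{itemize}
The same check is missing for your unitors. Also, the dense subspace you use for the pentagon must have all four factors bounded in disjoint intervals. For example, the associator $((H\boxtimes K)\boxtimes R)\boxtimes S\to(H\boxtimes K)\boxtimes(R\boxtimes S)$ needs $(H\boxtimes K)(S^1_-)$-bounded vectors.

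\textbf{Step 3.} Here the geometric picture is off and the decisive step is missing. $K$ cannot sit near $1$: $\varrho$ runs through $1$, and in configuration-space continuations points never cross. Nor is $\id_K\boxtimes\B_{H,R}$ a continuation of $H$ ``across the point of $R$''. Here is what actually happens in the first hexagon.
\begin{itemize}
\item Take $\xi\in H(I)$ with $I$ near $-i$, $\eta\in K$, and $\chi\in R(J)$ with $J$ a small interval around $i$.
\item Let $\alpha_1$ be the anticlockwise arc from $-i$ to $e^{\pi i/4}$, ending in an interval $I'\subset S^1_+\setminus\bar J$. Approximate $\alpha_1^\bullet(\xi\otimes\eta)$ by finite sums $\sum_k\xi_k\otimes\eta_k$ with $\xi_k\in H(I')$.
\item $\B_{H,K}$ alone carries $H$ into $S^1_+$, and $K$ is never localized. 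After the associator this gives $\sum_k\eta_k\otimes(\xi_k\otimes\chi)$.
\item Now $\B_{H,R}$ is defined on $H(S^1_-)\boxtimes R$. The canonical identification $H(I')\boxtimes R(J)\cong H(S^1_-)\boxtimes R$ is the continuation inside $J^c$, i.e.\ clockwise through $1$. This cancels $\varrho$ up to homotopy.
\item Hence $\id_K\boxtimes\B_{H,R}$ is a bare swap, producing $\sum_k\eta_k\otimes(\chi\otimes\xi_k)$.
\item On the other side, your lemma applied to $\alpha_1$, which avoids $J$, gives the same vector for $\B_{H,K\boxtimes R}$.
\end{itemize}
This cancellation is the content of the hexagon, and your proposal does not contain it. Your assertion that the second arc ``is exactly'' $\id_K\boxtimes\B_{H,R}$ skips it. You also need a short inner-product check: the associators are still rebracketing on vectors whose first factor is arbitrary and whose other two factors are bounded in $I'$ and $J$.

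For the second hexagon, do not move the block $H\boxtimes K$. That would require a lemma on continuation of a fused factor that you do not have. Instead, use the right-sided version of Proposition \ref{prop: SinglePathContinuationWRTDoublePathContinuation} to express $\B_{H\boxtimes K,R}$ as continuation of $R$ along $e^{\pi i}\cdot\varrho$, and run the mirror argument.
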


\section{The balance}
\label{sec: balance}

We now provide the braided $\mathrm{W}^*$-tensor category $\Rep(\A)$ with a balance. On a representation $H \in \Rep(\A)$, we define the balance as follows.

\begin{definition}\label{def: Balance}
    Let $H \in \Rep(\A)$ be a representation of $\A$. We denote by
    \[
    \theta_H:=e^{ -2\pi i L_0}: H \to H 
    \]
    the unitary on $H $ given by the action of $\widetilde{\exp}(-2\pi iL_0)\in \widetilde{\Mob}$.
\end{definition}

\begin{proposition}
    The unitary $\theta_H = e^{-2\pi i L_0}: H \xrightarrow{\cong} H 
    $ is an equivalence 
    of $\A$-representations.
\end{proposition}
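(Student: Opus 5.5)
We need to show that $\theta_H$ commutes with $\pi^H_I(x)$ for every interval $I\in\Jcal$ and every $x\in\A(I)$. The plan is to use Corollary \ref{cor: RepOfDiffA}: conjugation by $U^H$ of an element of $\Diff_\A^+(S^1)$ transports $\pi^H_I(x)$ to $\pi^H_{fI}$ of the vacuum-conjugated element.

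First we identify $\theta_H$ as $U^H$ of a specific element. The element $\widetilde{\exp}(-2\pi i L_0)\in\widetilde{\Mob}$ is the pair $(x\mapsto x-1,\id)\in\widetilde{\Diff^+}(S^1)$. The unitary representation of $\widetilde{\Mob}$ on $H$ lifts the restriction of the projective representation coming from $U^H$. Hence there is a unitary $V\in U(H_0)$ with $[V]=U(\id)=[\id]$ (so $V$ is a scalar $c\in U(1)$) such that
\[
\theta_H = U^H\big((x\mapsto x-1),\id, V\big)
\]
up to a phase. The phase is harmless: the central $U(1)$ acts by scalars, and scalars commute with everything.

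Next we apply Corollary \ref{cor: RepOfDiffA} with $f=\id$. It gives
\[
\theta_H\,\pi^H_I(x)\,\theta_H^* = \pi^H_I\big(V x V^*\big) = \pi^H_I(x),
\]
because $V$ is a scalar on $H_0$. The same computation in the vacuum representation checks consistency: there $\theta_{H_0}$ is $U(\id)$ up to a phase, so it is a scalar, consistent with $L_0$ having integer spectrum on $H_0$. Since the intertwining relation holds for all $I$, $\theta_H$ is a unitary in $\End_{\Rep(\A)}(H)$, which is the claim.

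The main obstacle is the first step: justifying that the canonical unitary lift of $\widetilde{\Mob}$ from \cite{MR58601} agrees, at the element $\widetilde{\exp}(-2\pi i L_0)$, with $U^H$ of an element of $\Diff_\A^+(S^1)$ lying over $((x\mapsto x-1),\id)$ up to a phase. Since the two representations define the same projective representation of $\widetilde{\Mob}$, they differ by a character of $\widetilde{\Mob}$ into $U(1)$. The cleanest fallback avoids this identification altogether. Any unitary implementing the projective class $U^H((x\mapsto x-1),\id)$ has the form $c\cdot U^H(\tilde f,f,V)$ for some scalar $c$. Corollary \ref{cor: RepOfDiffA} then shows that conjugation by such a unitary fixes each $\pi^H_I(\A(I))$ pointwise, and this suffices.
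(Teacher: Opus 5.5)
Your proof is correct and follows the paper's argument: apply Corollary \ref{cor: RepOfDiffA} with $f=\id$ and use that conjugation by $e^{-2\pi i L_0}$ is trivial on $\A(I)$, which you justify explicitly by noting that the vacuum unitary $V$ lying over $(x\mapsto x-1,\id)$ is a scalar. The step you flag as the main obstacle is not really one: the $\widetilde{\Mob}$-representation is by definition a lift of the projective class of $U^H$, so $\theta_H$ is a scalar multiple of $U^H(\tilde f,f,V)$, which is exactly your fallback argument.
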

\begin{proof}
    Let $I \in \Jcal$ and $x\in \A(I )$. Then, by Corollary \ref{cor: RepOfDiffA}, we have 
    \[
    e^{-2\pi iL_0}\circ \pi^H_{I}(x)\circ e^{2\pi i L_0} = \pi^H_{I}(e^{-2\pi i L_0 }\, x\, e^{2\pi i L_0}) = \pi^{H}_{I}(x),
    \]
    where we use that the action of $e^{-2\pi i L_0}$ on $\A(I)$ is the identity.
\end{proof}

In order to show that $\theta$ provides a balance on $\Rep(\A)$, we need to understand the action $e^{-2\pi i L_0}$ on the Connes fusion $H(I)\boxtimes K $ of two representations $H$ and $K$ over a given interval $I\in \Jcal$. We do so by specializing Theorem \ref{Thm: ConformalStructureOfFusion}. 

\begin{corollary}\label{cor: e2piiL0OnConnesFusion}
    Let $z\in I$ be arbitrary and $\lambda_z$ be the loop in $S^1$ based at $z$ given by $t\in[0,1]\mapsto e^{-2\pi i t}z$. Then, for any $\xi\otimes \eta \in H(I)\otimes K\subset H(I)\boxtimes K$, it holds that
\[
e^{-2\pi i L_0}(\xi\otimes \eta) = (\lambda_z^{\bullet})^{-1}(e^{-2\pi i L_0}\xi\otimes e^{-2\pi i L_0}\eta).
\]
\end{corollary}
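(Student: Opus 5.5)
We specialize Theorem \ref{Thm: ConformalStructureOfFusion} to a particular element of $\Diff^+_\A(S^1)$. The element $\widetilde{\exp}(-2\pi i L_0)\in\widetilde{\Mob}$ is $(x\mapsto x-1,\id)\in\widetilde{\Diff^+}(S^1)$. We first lift it to an element $g=(\tilde f,\id,V)\in\Diff^+_\A(S^1)$ with $V = e^{-2\pi i L_0}$ acting on $H_0$; since $L_0$ has integer spectrum on the vacuum (positive energy rotation representation of $\Mob$), we get $V=\id_{H_0}$. By the uniqueness of the lift of the projective $\widetilde{\Mob}$-action (\cite{MR58601}) and the uniqueness in Theorem \ref{Thm: RepIsConformal}, the operator $e^{-2\pi i L_0}$ on any representation $H$ equals $U^H(g)$, provided the lift of $\widetilde{\Mob}$ is chosen through the one-parameter group $t\mapsto \widetilde{\exp}(-2\pi i tL_0)$ in $\Diff^+_\A(S^1)$. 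This requires a short check: the one-parameter subgroup $t\mapsto(\widetilde{\exp}(-2\pi itL_0), e^{-2\pi i tL_0})$ of $\Diff^+_\A(S^1)$ acts on $H$ via $U^H$, and this is the genuine lift of the $\widetilde{\Mob}$ representation restricted to rotations. The same holds for $K$ and for $H(I)\boxtimes K$.

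Next we apply formula \eqref{eq: ConformalStructOnConnesFusion0}, using the path $\lambda(t) = (\widetilde{\exp}(-2\pi i tL_0), R_{-2\pi t}, e^{-2\pi i tL_0})$ from $1$ to $g$. This path descends to a continuous path in $\widetilde{\Diff^+}(S^1)$, so it is admissible. For $z\in I$, the curve $\lambda_z(t)=R_{-2\pi t}(z)=e^{-2\pi i t}z$ is exactly the loop in the statement. Substituting $f=\id$, we have $U(g)^{-1}\Omega = V^{-1}\Omega=\Omega$ and $U^H(g)Z(\xi,I)\Omega = U^H(g)\xi = e^{-2\pi i L_0}\xi$. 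Therefore
\[
U_0^{H\boxtimes K}(g)(\xi\otimes\eta) = (\lambda_z^\bullet)^{-1}\big(e^{-2\pi i L_0}\xi\otimes e^{-2\pi i L_0}\eta\big),
\]
where the right-hand vector lies in $H(I)\boxtimes K$ because $f I=I$. By Theorem \ref{Thm: ConformalStructureOfFusion}, the left side is $U^{H\boxtimes K}(g)(\xi\otimes\eta)$, which is $e^{-2\pi iL_0}(\xi\otimes\eta)$ by the first step.

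We expect the main obstacle to be the first step: matching the $\widetilde{\Mob}$-action used in Definition \ref{def: Balance}, which is the unique genuine lift, with $U^H$ evaluated on a specific element of the central extension $\Diff^+_\A(S^1)$, so that the phase ambiguity $V$ is fixed correctly. We would handle it by noting that on $H_0$ the genuine $\widetilde{\Mob}$ representation factors through $\Mob$ and gives $V=\id$. The map $\widetilde{\Mob}\to\Diff^+_\A(S^1)$ sending an element to (element, its genuine vacuum unitary) is then a continuous homomorphism, so $U^H$ composed with it is a genuine lift on $H$. By uniqueness, this lift agrees with the one defining $e^{-2\pi i L_0}$.
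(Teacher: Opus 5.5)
Your proposal is correct and follows the paper's proof. Both specialize Theorem \ref{Thm: ConformalStructureOfFusion} to a lift of $\widetilde{\exp}(-2\pi i L_0)$ and use that the vacuum Möbius unitary fixes $\Omega$, so that $U^H(g)Z(\xi,I)U(g)^{-1}\Omega=U^H(g)\xi$; your explicit choice $g=(\tilde f,\id,1)$ via the homomorphism $\widetilde{\Mob}\to\Diff^+_\A(S^1)$ spells out the phase normalization that the paper's one-line argument leaves implicit.
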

\begin{proof}
    The claim follows from Theorem \ref{Thm: ConformalStructureOfFusion} and the fact that, if $(\tilde{f},f,V)\in \Diff_\A^+(S^1)$ is in the preimage of $\widetilde{\Mob}$, then $U^H(\tilde{f},f,V)\circ Z(\xi,I)\circ U(\tilde{f},f,V)^{-1}(\Omega) = U^H(\tilde{f},f,V)\,\xi$, since $\Omega$ is fixed by $\Mob$.
\end{proof}

We are now ready to prove the main theorem of the paper.

\begin{theorem}\label{Thm: RepAutAIsCrossedBalanced}
    Let $\A$ be a (not necessarily rational) conformal net. The braided $\mathrm{W}^*$-category $\Rep(\A)$ of representations of $\A$, equipped with the family $\theta$, is a balanced $\mathrm{W}^*$-tensor category.
\end{theorem}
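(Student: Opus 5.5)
Naturality and unitarity of $\theta$ come first. Each $\theta_H$ is unitary by definition. For a morphism $F:H\to K$ in $\Rep(\A)$ we need $F\theta_H=\theta_K F$. By Corollary \ref{cor: RepOfDiffA}, $U^H(\tilde f,f,V)$ lies in the von Neumann algebra generated by the $\pi^H_I(\A(I))$. More directly, Theorem \ref{Thm: RepIsConformal} says $U^H$ is determined by the local elements $\pi^H_I(U(\tilde f,f,V))$, which $F$ intertwines. Hence $F U^H(g) = U^K(g) F$ for every $g\in\Diff^+_\A(S^1)$, because the groups $\Diff^+_\A(I)$ generate. The $\widetilde{\Mob}$-actions are the unique lifts of these projective actions, so they are intertwined as well, and in particular $\theta$ is natural.

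The main step is the twist identity $\theta_{H\boxtimes K}=(\theta_H\boxtimes\theta_K)\circ\B_{K,H}\circ\B_{H,K}$. Write $H\boxtimes K=H(-)\boxtimes K$ and take $\xi\otimes\eta$ with $\xi\in H(-)$ and $\eta\in K(+)$. Corollary \ref{cor: e2piiL0OnConnesFusion}, applied with $I=S^1_-$ and base point $z=-i$, gives
\[
\theta_{H\boxtimes K}(\xi\otimes\eta)=(\lambda_z^\bullet)^{-1}(\theta_H\xi\otimes\theta_K\eta).
\]
Here $\lambda_z^\bullet$ is the one-interval path continuation of $H(-)\boxtimes K$ along a full clockwise loop, so $(\lambda_z^\bullet)^{-1}$ is continuation along the anticlockwise full loop. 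The vector $\theta_H\xi$ is still $S^1_-$-bounded, since $\theta_H$ commutes with $\A(S^1_+)$. Because $\theta_H\boxtimes\theta_K$ commutes with path continuations (by \cite[Prop. 2.17]{Gui21}), it suffices to show that $(\lambda_z^\bullet)^{-1}$, as a map $H(-)\boxtimes K\to H(-)\boxtimes K$, equals the double braiding $\B_{K,H}\circ\B_{H,K}$.

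To compute the double braiding, unwind the definitions. $\B_{H,K}$ is $\varrho^\bullet$, an anticlockwise half-turn of the $H$-point from $-i$ to $i$, followed by the swap to $K\boxtimes H(+)$. $\B_{K,H}$ then requires rewriting $K\boxtimes H(+)$ as $K(-)\boxtimes H$, continuing the $K$-point along $\varrho$, and swapping back. Proposition \ref{prop: SinglePathContinuationWRTDoublePathContinuation} lets us express both steps as two-point path continuations in $\text{Conf}_2(S^1)$ on $H(\cdot)\boxtimes K(\cdot)$, with the swaps commuting with continuation. The first step moves the $H$-point anticlockwise from $-i$ to $i$ while the $K$-point sits just to the side. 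The second moves the $K$-point anticlockwise from $-i$ to $i$ while the $H$-point sits at $i$.

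The composite is a path in $\text{Conf}_2(S^1)$ from $(-i,i)$ back to $(-i,i)$. After reparametrizing, it is homotopic to the loop in which both points rotate rigidly anticlockwise by a full turn. Projected to the $H$-coordinate, this is the anticlockwise full loop, with the $K$-point moving out of the way. Proposition \ref{prop: SinglePathContinuationWRTDoublePathContinuation} then identifies the resulting two-point continuation with the one-interval continuation $(\lambda_z^\bullet)^{-1}$, and homotopy invariance \cite[Prop. 2.11]{Gui21} gives the equality.

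I expect the main obstacle to be exactly this bookkeeping of paths. One must check the orientations carefully, since our braiding is the reverse of Gui's and $\lambda_z$ runs clockwise. One must also check that the swap isomorphisms and the passage between one-interval and two-interval fusions are compatible with the continuations. I would first draw the configuration-space loop explicitly and verify that its class in $\pi_1(\text{Conf}_2(S^1))\cong\Z$ matches that of the rigid full rotation.
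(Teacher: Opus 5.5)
Your proposal is correct and is essentially the paper's proof: Corollary \ref{cor: e2piiL0OnConnesFusion} with $z=-i$ reduces the twist identity to identifying $\mathbb{B}_{K,H}\circ\mathbb{B}_{H,K}$ on $H(-)\boxtimes K$ with continuation of the $H$-point along the anticlockwise full loop, which is the inverse of $\lambda_z$. That identification follows from Proposition \ref{prop: SinglePathContinuationWRTDoublePathContinuation}, and $\theta_H\boxtimes\theta_K$ commutes with path continuations. One correction to your bookkeeping: in each braid the other point is not stationary (the $H$-point cannot ``sit at $i$'' while the $K$-point travels from $-i$ to $i$). Rather, it must move to the opposite half-circle, which in $\text{Conf}_2(S^1)$ means, up to homotopy, anticlockwise through $-1$. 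So each braid is a rigid anticlockwise half-turn of both points (the path $(\varrho, e^{\pi i}\cdot\varrho)$ in the paper), and the composite is the rigid full turn on the nose; the paper streamlines this by rewriting the second continuation as the one-interval continuation $(e^{\pi i}\cdot\varrho)^\bullet$ of the $H$-point, so the double braiding is $\tau^\bullet$ by concatenation.
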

\begin{proof}
    The family $\theta$ is natural by Theorem \ref{Thm: RepIsConformal}. We have to show that, for every $H,\, K\in\Rep(\A)$, the following diagram commutes
\begin{equation}\label{eq: BalanceCondition}\begin{tikzcd}
	{H\boxtimes K} && {H\boxtimes K} \\
	{K\boxtimes H} && {H\boxtimes K}.
	\arrow["{\theta_{H\boxtimes K}}", from=1-1, to=1-3]
	\arrow["{\mathbb{B}_{H,K}}"', from=1-1, to=2-1]
	\arrow["{\mathbb{B}_{K,H}}"', from=2-1, to=2-3]
	\arrow["{\theta_H\boxtimes\theta_K}"', from=2-3, to=1-3]
\end{tikzcd}\end{equation}
     Let $H,\, K\in \Rep(\A)$. We first compute the composition $\mathbb{B}_{K,H}\circ \mathbb{B}_{H,K}$. We write $e^{\pi i }\cdot \varrho:[0,1]\to S^1$ for the path $t\mapsto e^{\pi i (t + \nicefrac{1}{2})}$, and $\tau:[0,1]\to S^1$ for $t\mapsto e^{\pi i (2t-\nicefrac{1}{2})}$. It follows that $\tau = (e^{\pi i }\cdot \varrho)\ast \varrho$. In addition, the pair $(\varrho, e^{\pi i  }\cdot \varrho)$ is a path in $\text{Conf}_2(S^1)$ from $S^1_-\times S^1_
     +$ to $S^1_+\times S^1_-$. The composition $\mathbb{B}_{K,H}\circ \mathbb{B}_{H,K}$ is given, by definition, by the top-right leg of the following diagram.
\[\begin{tikzcd}
	{H(-)\boxtimes K} & {H(+)\boxtimes K} & {K\boxtimes H(+)} & {K(-)\boxtimes H(+)} \\
	{H(-)\boxtimes K} &&& {K(-)\boxtimes H} \\
	{K\boxtimes H(-)} &&& {K(+)\boxtimes H} \\
	{K(+)\boxtimes H(-)} & {K(+)\boxtimes H} && {H\boxtimes K(+)}.
	\arrow["{\varrho^\bullet}", from=1-1, to=1-2]
	\arrow["{\tau^\bullet}"', from=1-1, to=2-1]
	\arrow["\cong", from=1-2, to=1-3]
	\arrow["{(e^{\pi i }\cdot\varrho)^\bullet}", from=1-2, to=2-1]
	\arrow["\cong", from=1-3, to=1-4]
	\arrow["{(e^{\pi i }\cdot\varrho)^\bullet}", from=1-3, to=3-1]
	\arrow["\cong", from=1-4, to=2-4]
	\arrow["{(\varrho, e^{\pi i }\cdot\varrho)^\bullet}", from=1-4, to=4-1]
	\arrow["\cong"', from=2-1, to=3-1]
	\arrow["{\varrho^\bullet}", from=2-4, to=3-4]
	\arrow["{\varrho^\bullet}", from=2-4, to=4-2]
	\arrow["\cong"', from=3-1, to=4-1]
	\arrow["\cong", from=3-4, to=4-4]
	\arrow["\cong"', from=4-1, to=4-2]
	\arrow["\cong"', from=4-2, to=4-4]
\end{tikzcd}\]
We claim that the outer diagram commutes, which we show by arguing the commutativity of all the inner diagrams. We discuss the commutativity of the inner diagrams from top to bottom. The first triangle commutes by compatibility of path continuations with concatenation of paths. The next quadrilateral commutes trivially, and the two below commute by Proposition \ref{prop: SinglePathContinuationWRTDoublePathContinuation}. The bottom diagram also commutes trivially. Hence, the commutativity of diagram \eqref{eq: BalanceCondition} is equivalent to the commutativity of the outer diagram in
\[\begin{tikzcd}
	{H(-)\boxtimes K} &&&&&& {H(-)\boxtimes K} \\
	\\
	& {H(-)\boxtimes K} &&&&& {H\boxtimes K(+)} \\
	\\
	{H(-)\boxtimes K} &&&&&& {H\boxtimes K(+)}.
	\arrow["{e^{- 2\pi i L_0}}", from=1-1, to=1-7]
	\arrow["{e^{-2\pi i L_0}\boxtimes e^{-2\pi i L_0}}", from=1-1, to=3-2]
	\arrow["{\tau^\bullet}"', from=1-1, to=5-1]
	\arrow["\cong", from=1-7, to=3-7]
	\arrow["{\tau^\bullet}", from=3-2, to=1-7]
	\arrow["{e^{-2\pi i L_0}\boxtimes e^{-2\pi i L_0}}"', from=5-1, to=1-7]
	\arrow["\cong", from=5-1, to=5-7]
	\arrow["{e^{-2\pi i L_0}\boxtimes e^{-2\pi i L_0}}"', from=5-7, to=3-7]
\end{tikzcd}\]
The upper triangle commutes by Corollary \ref{cor: e2piiL0OnConnesFusion} and the fact that $(\lambda_z^\bullet)^{-1}$ can be taken to be the path-continuation $\tau^\bullet$. The middle diagram commutes by the fact that the $\Diff_\A^+(S^1)$-action commutes with the maps induced by inclusions of intervals, and hence with path-continuations. The bottom diagram commutes trivially. Therefore, $\theta$ provides a balance on $\Rep(\A)$.
\end{proof}

In the case when the conformal net $\A$ is rational, the category $\Rep(\A)$ is a unitary fusion category (from results in \cite{MR1838752, MR2100058}). Then, every object $H\in \Rep(\A)$ comes with a dual representation $\overline{H}\in \Rep(\A)$ and evaluation and coevaluation morphisms
\[
\text{ev}_{H}: \overline{H}\boxtimes H \to H_0\hspace{2cm} \text{coev}_{H}: H_0\to H\boxtimes \overline{H}
\]
satisfying the snake identities. By unitarity, $\Rep(\A)$ also comes equipped with a preferred pivotal structure, that is a family of compatible isomorphisms $\pi_H: H\to \overline{\overline{H}}$ indexed by $H\in \Rep(\A)$. We can define a balance $\theta'$ on $\Rep(\A)$ by the usual picture of a kink, meaning
\begin{align*}
\theta'_H: H\cong H\boxtimes H_0&\xrightarrow{\id\boxtimes\text{coev}_{H}}H\boxtimes H\boxtimes\overline{H}\\&\xrightarrow{\mathbb{B}_{H,H}\boxtimes \id} H\boxtimes H\boxtimes\overline{H} \xrightarrow{\id\boxtimes\pi_{H}\boxtimes \id}H\boxtimes\overline{\overline{H}}\boxtimes\overline{H} \xrightarrow{\id\boxtimes \text{ev}_{\overline{H}}} H\boxtimes H_0\cong H.
\end{align*}
It is therefore natural to ask if, in this case, the balance $\theta'$ agrees with the balance we have defined in Theorem \ref{Thm: RepAutAIsCrossedBalanced}. This is essentially the conformal spin and statistics Theorem in \cite{MR1410566}. There, the authors consider the category $\DHR(\A)$ of DHR endomorphisms of $\A$ instead of $\Rep(\A)$, but these categories are well-known to be equivalent. See \cite{MR297259, MR334742, frs, MR1199171} for an introduction to DHR endomorphisms, and \cite[Sec. 6 and Thm. 6.6]{Gui21} for the equivalence of braided tensor categories.

Actually, to obtain the braiding $\mathbb{B}$ from Section \ref{Sec: CatOfReps}, we need to take the opposite of the monoidal structure and the braiding of $\DHR(\A)$ considered in \cite{MR1410566}. Indeed, by the way endomorphisms are localized in the definition of the braiding in \cite[p. 8]{MR1410566}, it is clear their conventions agree with those in \cite[Sec. 6]{Gui21}. Since our braiding is the opposite to \cite{Gui21} (see Remark \ref{Rk: Opposite}), it is also the opposite to \cite{MR1410566}. Hence, transporting their result to our setting requires us to also take the opposite balance, that is, its inverse.

Let us write $\DHR^{\text{rev}}(\A)$ for the monoidal opposite of $\DHR(\A)$ with the reverse braiding, and idem for $\Rep^{\text{rev}}(\A)$, where $\Rep(\A)$ uses our conventions from the previous section for the tensor product and the braiding, which recall are the opposite to \cite{Gui21}. The analogue to our conventions of the functor $\DHR(\A)\to \Rep^{\text{rev}}(\A)$ in \cite[Rk. 6.7]{Gui21} produces an equivalence of braided tensor categories $\DHR^{\text{rev}}(\A)\cong \Rep(\A)$.

The equivalence $\DHR^{\text{rev}}(\A)\cong \Rep(\A)$ is an equivalence of braided unitary fusion categories, hence an equivalence of balanced tensor categories if one takes the canonical balance induced by the unitary rigid structure on both sides. Fix $\rho\in \DHR^{\text{rev}}(\A)$ and let $H_\rho\in \Rep(\A)$ be the image of $\rho$ under the functor $\DHR^{\text{rev}}(\A)\to \Rep(\A)$ implementing the equivalence $\DHR^{\text{rev}}(\A)\cong \Rep(\A)$ above. Then, the analogue to \cite[Thm. 3.13]{MR1410566} for our conventions implies that $\theta_{H_\rho}'$ is given by the action $e^{-2\pi i L_0}.$ Since the functor $\DHR^{\text{rev}}(\A)\to \Rep(\A)$ is essentially surjective, we obtain the following~result.

\begin{theorem}\label{Thm: SpinStatistics}
    Let $\A$ be a rational conformal net, and denote by $\theta'$ the canonical balance on the unitary fusion category $\Rep(\A)$. Let $\theta$ be the balance on $\Rep(\A)$ from Theorem \ref{Thm: RepAutAIsCrossedBalanced}. Then, the balances $\theta$ and $\theta'$ agree.
\end{theorem}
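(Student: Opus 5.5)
The proof reduces to an objectwise comparison on representations coming from DHR endomorphisms, and then spreads the result to all of $\Rep(\A)$ using naturality. Both $\theta$ and $\theta'$ are natural families of unitary automorphisms of the identity functor of $\Rep(\A)$. For $\theta$ this is Theorem \ref{Thm: RepAutAIsCrossedBalanced}. For $\theta'$ it is the standard fact that the kink built from the unitary duality and the braiding $\mathbb{B}$ is natural. So two things suffice: that $\theta_H=\theta'_H$ for every $H$ in a class of objects meeting every isomorphism class, and that both families transport along unitary isomorphisms. The second point is just naturality. If $u:H\to H'$ is a unitary intertwiner, then $\theta_{H'}=u\theta_H u^*$, and likewise for $\theta'$. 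Agreement on one representative of each isomorphism class therefore gives agreement everywhere.

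For the representatives I would take the objects $H_\rho$, the images of $\rho\in\DHR^{\text{rev}}(\A)$ under the braided equivalence $\DHR^{\text{rev}}(\A)\cong\Rep(\A)$ obtained from the analogue of \cite[Rk. 6.7]{Gui21}. Essential surjectivity of this functor guarantees that every isomorphism class is hit. On the DHR side, the conformal spin–statistics theorem \cite[Thm. 3.13]{MR1410566} identifies the canonical rigid twist of $\rho$, built from the statistics operator and the standard left inverse, with $e^{2\pi i L_0^\rho}$ in their conventions. Here $L_0^\rho$ is the generator of rotations in the covariant representation $\pi_0\circ\rho$, which under the functor is unitarily $H_\rho$ with its $\widetilde{\Mob}$-action. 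The functor is a braided equivalence of unitary fusion categories, so it preserves the canonical unitary duality (standard solutions of the conjugate equations) and the canonical pivotal structure. Hence it carries the canonical twist of $\DHR^{\text{rev}}(\A)$ to $\theta'$.

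Next, passing from $\DHR(\A)$ to $\DHR^{\text{rev}}(\A)$ replaces the braiding by its reverse. The kink then computes the inverse of the original twist, namely $e^{-2\pi i L_0}$. This is consistent with Remark \ref{Rk: Opposite} and the convention discussion preceding the statement. Finally, one must check that the $\widetilde{\Mob}$-action on $H_\rho$ supplied by Theorem \ref{Thm: RepIsConformal} coincides with the one used in \cite{MR1410566}. Both actions implement the same covariance, Corollary \ref{cor: RepOfDiffA}, and the action is unique: Theorem \ref{Thm: RepIsConformal} together with \cite{MR58601} gives a unique lift to $\widetilde{\Mob}$ with that local property. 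I expect uniqueness on these grounds; for non-irreducible $\rho$, decompose into irreducibles, where such implementations agree up to the inner-implementability argument. Granting this, $\theta'_{H_\rho}=e^{-2\pi i L_0}=\theta_{H_\rho}$.

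The main obstacle is bookkeeping of conventions. The sign of the twist flips under reversing the braiding, and the orientation of the loop $\lambda_z$ in Corollary \ref{cor: e2piiL0OnConnesFusion} must match the localization conventions of \cite[p. 8]{MR1410566}. I would handle this by verifying directly on a single irreducible $\rho$ that the braiding $\mathbb{B}_{H_\rho,H_\rho}$ corresponds to the \emph{inverse} of the statistics operator $\varepsilon(\rho,\rho)$ of \cite{MR1410566}. I would check this from the path $\varrho$ moving $S^1_-$ to $S^1_+$ anticlockwise, compared with their choice of left/right localization. With that identification in hand, the sign $e^{-2\pi i L_0}$ follows.
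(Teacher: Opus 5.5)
Your proposal is correct and follows the paper's argument. You transport the canonical twist along the braided unitary equivalence $\DHR^{\text{rev}}(\A)\cong\Rep(\A)$, invoke the Guido--Longo spin--statistics theorem with the sign flipped because the braiding is reversed, and conclude on all objects by essential surjectivity and naturality. You also check that the $\widetilde{\Mob}$-actions on $H_\rho$ agree, which the paper leaves implicit. Your stated justification, uniqueness in Theorem \ref{Thm: RepIsConformal}, does not apply to a bare $\widetilde{\Mob}$-action. It suffices to work on irreducibles, by semisimplicity and naturality. There both actions implement the same automorphisms of $B(H_\rho)$, hence agree projectively, and the lift to $\widetilde{\Mob}$ is unique by \cite{MR58601}.
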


\bibliographystyle{halpha-abbrv}
\bibliography{Balanced}
\end{document}